\newtheorem{thm}{Theorem}
\newtheorem{theorem}{Theorem}[section]
\newtheorem{corollary}[theorem]{Corollary}
\newtheorem{proposition}[theorem]{Proposition}
\newtheorem{lemma}[theorem]{Lemma}
\title [Covering  by Centralizers]{Covering by Centralizers}
\author[Lewis]{Mark L. Lewis}
\address{Department of Mathematical Sciences, Kent State University, Kent, OH  44242; lewis@math.kent.edu}
\author[McCulloch]{Ryan McCulloch}
\address{Department of Mathematics and Statistics, Binghamton University, Binghamton, NY 13902; rmccullo1985@gmail.com}
\date{}
\begin{document}
	
\begin{abstract}
In this paper, we consider covers of finite groups by centralizers of elements.  We show that the set of centralizers that are maximal under the partial ordering form a cover of the group.  We also show that the set of centralizers that are minimal under the partial ordering form a cover of the group.  We show for $F$-groups that are nonabelian $p$-groups that the number of  distinct nontrivial centralizers is congruent to $1$ modulo $p$.
\end{abstract}

\subjclass[2010]{Primary 20D99; Secondary 05C25}
\keywords{Centralizers, Group covers, Centralizer graph}

\maketitle    
	
\section{Introduction}

In this paper, all groups are finite.  Given a group $G$, a {\it cover} for $G$ is a set of proper subgroups whose union is all of $G$.  A cover is called {\it irredundant} if all proper subsets of the cover are not covers of $G$.  While there has a push in current research to determine the irredundant covers of smallest size (see \cite{Gar} and {\cite{GKS}), another strain in current research is to look at covers where all the elements in the cover satisfy a certain property (see \cite{barg}, \cite{mason} and \cite{sun}).  We recommend the expository paper \cite{cover-exp} for a description of the work on covers that is going on.  

In this paper, we consider covers where the cover consists of centralizers of noncentral elements or consists of closely related subgroups.  We begin by considering covers that consist entirely of centralizers of noncentral elements.  Fix a nonabelian group $G$.  Set 
$$\mathcal {C} (G) = \{ C_G (g) \mid g \in G \setminus Z(G) \}.$$  
Since every element of $G$ either lies in its own centralizer or is in the center, $\mathcal{C} (G)$ forms a cover of $G$.  We can use containment to define a partial ordering on the centralizers.  Using this partial ordering, we can obtain two subsets of the set of all centralizers that are covers.

\begin{thm}\label{Intro thm 1}
Let $G$ be a nonabelian group.  Then the following are true:
\begin{enumerate}
\item The set of maximal elements of $\mathcal {C} (G)$ under containment form a cover of $G$.
\item The set of minimal elements of $\mathcal {C} (G)$ under containment form a cover of $G$.
\end{enumerate}
\end{thm}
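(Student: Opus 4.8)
The plan is to prove both statements by the same trick: show that every centralizer $C_G(g)$ with $g \in G \setminus Z(G)$ contains a minimal element of $\mathcal{C}(G)$ and is contained in a maximal element of $\mathcal{C}(G)$, and then invoke the fact that $\mathcal{C}(G)$ itself is a cover. Since $G$ is finite, $\mathcal{C}(G)$ is a finite poset under containment, so maximal and minimal elements exist, and in fact every element of a finite poset lies above some minimal element and below some maximal element. The only thing that needs checking is that these maximal/minimal elements are genuinely centralizers of noncentral elements (so that they lie in $\mathcal{C}(G)$) and that they remain proper subgroups of $G$ — but any $C_G(g)$ with $g \notin Z(G)$ is automatically a proper subgroup, since $C_G(g) = G$ would force $g \in Z(G)$.

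For part (1), I would argue as follows. Let $x \in G$ be arbitrary. If $x \in Z(G)$, then $x$ lies in every centralizer in $\mathcal{C}(G)$, in particular in some maximal one (the set of maximal elements is nonempty because $\mathcal{C}(G)$ is a nonempty finite poset — nonempty since $G$ is nonabelian). If $x \notin Z(G)$, then $C_G(x) \in \mathcal{C}(G)$, so by finiteness there is a maximal element $M = C_G(g)$ of $\mathcal{C}(G)$ with $C_G(x) \subseteq M$; since $x \in C_G(x)$, we get $x \in M$. Hence every element of $G$ lies in some maximal element, i.e. the maximal elements form a cover.

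For part (2), the argument is the dual. If $x \notin Z(G)$, then $C_G(x) \in \mathcal{C}(G)$ contains a minimal element $N = C_G(h)$ of $\mathcal{C}(G)$ with $N \subseteq C_G(x)$; the subtlety is that $x$ need not lie in $N$, so I would instead observe the following: take $N$ to be a minimal element of $\mathcal{C}(G)$ of the form $C_G(h)$ with $h \in C_G(x)$ — such $N$ exists by minimizing over the (nonempty, since $x$ itself qualifies) collection $\{ C_G(h) : h \in C_G(x) \setminus Z(G) \}$, where I must check this collection is nonempty, which holds because $x \in C_G(x)$ and $x \notin Z(G)$. Then $h$ and $x$ commute. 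I claim $x \in C_G(h)$: indeed $x$ commutes with $h$ by construction, so $x \in C_G(h) = N$. Thus $x$ lies in the minimal element $N$. For $x \in Z(G)$, again $x$ lies in every member of $\mathcal{C}(G)$, hence in some minimal one. So the minimal elements form a cover.

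The only place requiring care — and the one I would flag as the main (minor) obstacle — is part (2): one cannot simply shrink $C_G(x)$ to an arbitrary minimal subcentralizer and hope $x$ still lies in it, since a centralizer contained in $C_G(x)$ need not contain $x$. The fix above is to descend only through centralizers of elements that commute with $x$, which stays within $C_G(x)$ and automatically recaptures $x$ at the bottom. One should also confirm that the minimal element obtained this way is minimal in all of $\mathcal{C}(G)$, not merely among centralizers of elements of $C_G(x)$: if $C_G(k) \subsetneq C_G(h)$ for some noncentral $k \in G$, then in particular $k \in C_G(h) \subseteq C_G(x)$ wait — more carefully, $C_G(k) \subsetneq C_G(h)$ need not put $k$ inside $C_G(h)$, so instead I would take $N$ to be an element of $\mathcal{C}(G)$ that is minimal (among \emph{all} of $\mathcal{C}(G)$) subject to $N \subseteq C_G(x)$, then show $N = C_G(h)$ for some $h \in C_G(x)$ by the standard fact that $C_G(h) \subseteq C_G(h') $ whenever $h' \in \langle h \rangle$; taking $h$ to generate a suitable cyclic piece keeps $C_G(h) = N$ and forces $h \in C_G(x)$. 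Assembling these observations gives the result.
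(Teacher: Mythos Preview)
Your argument for part (1) is exactly the paper's: $\mathcal{C}(G)$ covers $G$, and every member lies below a maximal one.

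For part (2) you take a genuinely different route. The paper argues via $\mathcal{Z}(G)$: it first observes that the maximal elements of $\mathcal{Z}(G)$ already cover $G$, then uses the order-reversing bijection $C_G(g) \leftrightarrow Z(g)$ to note that each maximal $Z(g)$ is the center of a minimal $C_G(g)$ and hence contained in it; so the minimal centralizers cover. Your approach is direct and avoids $\mathcal{Z}(G)$ entirely, which is a perfectly good alternative and arguably more elementary.

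That said, your writeup for (2) wobbles. First, the sentence ``$C_G(k) \subsetneq C_G(h)$ need not put $k$ inside $C_G(h)$'' is false: $k \in C_G(k) \subseteq C_G(h)$ always. Second, your initial minimization over $\{C_G(h) : h \in C_G(x)\setminus Z(G)\}$ (the centralizers \emph{containing} $x$) really does leave open whether the minimum is minimal in all of $\mathcal{C}(G)$, and your attempted patch via cyclic subgroups does not address this. The clean version of your idea is the one you half-state at the end: pick $N \in \mathcal{C}(G)$ minimal subject to $N \subseteq C_G(x)$ (possible since $C_G(x)$ itself qualifies). Any $M \in \mathcal{C}(G)$ with $M \subsetneq N$ would also satisfy $M \subseteq C_G(x)$, so $N$ is minimal in $\mathcal{C}(G)$ outright. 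Writing $N = C_G(h)$, we have $h \in N \subseteq C_G(x)$, hence $x \in C_G(h) = N$. That finishes (2) directly, with no appeal to $\mathcal{Z}(G)$.
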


For each element $g \in G \setminus Z(G)$, we define $Z(g) = Z (C_G (g))$ and we say that $Z (g)$ is the {\it center} of $g$.  We set $\mathcal{Z} (G) = \{ Z(g) \mid g \in G \setminus Z (G) \}$.  Since every element of $g$ either lies in its center or in $Z (G)$ and we see that $Z(G) \le Z(g)$  for all $g \in G \setminus Z(G)$, we have $\mathcal{Z} (G)$ is a cover for $G$. Again, we see that containment defines a partial ordering of $\mathcal{Z} (G)$.   Using this partial ordering we obtain the only irredundant cover using element centers.

\begin{thm}\label{Intro thm 2}
If $G$ is a nonabelian group, then the maximal elements of $\mathcal{Z} (G)$ under containment is the only subset of $\mathcal{Z} (G)$ that is an irredundant cover of $G$.     
\end{thm}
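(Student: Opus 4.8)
The plan is to build everything on a single structural observation about element centers. First I would record the characterization $Z(g) = \{\, x \in G : C_G(g) \le C_G(x) \,\}$ for $g \in G \setminus Z(G)$: if $x \in Z(g) = Z(C_G(g))$ then $x$ centralizes $C_G(g)$, so $C_G(g) \le C_G(x)$; conversely, if $C_G(g) \le C_G(x)$ then $g \in C_G(x)$, hence $x \in C_G(g)$, and since $x$ centralizes $C_G(g)$ we get $x \in Z(C_G(g)) = Z(g)$. From this the key comparison lemma drops out: if $g \in Z(h)$, then $Z(g) \le Z(h)$. Indeed, $g \in Z(h)$ gives $C_G(h) \le C_G(g)$, and then for every $x \in Z(g)$ we have $C_G(h) \le C_G(g) \le C_G(x)$, i.e. $x \in Z(h)$. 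In particular, if $Z(g)$ is a maximal element of $\mathcal{Z}(G)$, then $g$ lies in no member of $\mathcal{Z}(G)$ other than $Z(g)$ itself; it is a private element of $Z(g)$.

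Next I would collect the routine facts. Every $Z(g)$ is a proper subgroup of $G$: if $Z(C_G(g)) = G$ then $C_G(g) = G$, forcing $g \in Z(G)$, contrary to $g \notin Z(G)$. Since $G$ is finite, $\mathcal{Z}(G)$ is a finite collection of proper subgroups, so every member sits below a maximal member; hence the set $\mathcal{M}$ of maximal elements of $\mathcal{Z}(G)$ has union equal to $\bigcup \mathcal{Z}(G) = G$, so $\mathcal{M}$ is a cover. It is irredundant because each $Z(g) \in \mathcal{M}$ contains its private element $g$, which no other member of $\mathcal{M}$ covers.

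Finally I would prove uniqueness. Let $\mathcal{A} \subseteq \mathcal{Z}(G)$ be any irredundant cover of $G$. For a maximal element $Z(g)$, some member $Z(h) \in \mathcal{A}$ must contain $g$; the lemma gives $Z(g) \le Z(h)$, and maximality of $Z(g)$ forces $Z(h) = Z(g)$, so $\mathcal{M} \subseteq \mathcal{A}$. Conversely, if some $Z(h) \in \mathcal{A}$ were not maximal, choose a maximal $Z(g)$ with $Z(h) \le Z(g)$ and $Z(g) \ne Z(h)$; then $Z(g) \in \mathcal{A}$ by what was just shown, so $Z(h) \le Z(g) \in \mathcal{A} \setminus \{Z(h)\}$ and $\mathcal{A} \setminus \{Z(h)\}$ still covers $G$, contradicting irredundancy. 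Hence $\mathcal{A} = \mathcal{M}$, which is the assertion of the theorem.

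The whole argument is powered by the single lemma $g \in Z(h) \Rightarrow Z(g) \le Z(h)$ (equivalently, the description of $Z(g)$ through reverse containment of centralizers); after that, the covering statements are bookkeeping in a finite poset. Accordingly, the only step needing genuine care is the verification of that characterization and the comparison lemma, together with the remark that maximal element centers are proper subgroups — the combinatorics of irredundant covers presents no real obstacle once the private elements $g$ are in hand.
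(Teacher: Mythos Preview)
Your proof is correct and follows essentially the same route as the paper: both rest on the observation that for a maximal $Z(g)\in\mathcal{Z}(G)$ the element $g$ (in the paper's language, the whole class $Z^*(g)$) lies in no other member of $\mathcal{Z}(G)$, which forces every covering subset of $\mathcal{Z}(G)$ to contain all maximal members. Your comparison lemma $g\in Z(h)\Rightarrow Z(g)\le Z(h)$ is precisely the equivalence $(1)\Leftrightarrow(3)$ of the paper's Lemma~\ref{star 2}, and your private-element remark is its Corollary~\ref{maximal Z}; the only difference is that you work with a single representative $g$ rather than the $Z^*$ partition, which is a cosmetic streamlining rather than a new idea.
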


Using the maximal centers of elements, we also obtain information about covers by centralizers and those that are irredundant.

\begin{thm} \label{intro max irredundant}
Let $G$ be a nonabelian group and let $\mathfrak{C}$ be a subset of $\mathcal{C} (G)$.  Then the following are true:
\begin{enumerate}
\item $\mathfrak{C}$ is a cover of $G$ if and only if every maximal subgroup of $\mathcal {Z} (G)$ lies in some subgroup of $\mathfrak {C}$.
\item If $\mathfrak{C}$ is an irredundant cover of $G$, then it has size at most the number of maximal subgroups in $\mathcal{Z} (G)$.
\end{enumerate}
\end{thm}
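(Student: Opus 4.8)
The plan is to first record the elementary ``reciprocity'' observation that for $g,h \in G$ one has $g \in C_G(h)$ if and only if $h \in C_G(g)$, together with the remarks that $g \in Z(g)$ whenever $g \in G \setminus Z(G)$ and that, since $\mathcal{Z}(G)$ is a finite poset, every member of $\mathcal{Z}(G)$ lies below some maximal member. These three facts are all that part (1) really needs.

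For part (1), the forward direction is the heart of the matter. Suppose $\mathfrak{C}$ covers $G$ and let $M$ be a maximal subgroup of $\mathcal{Z}(G)$; by definition $M = Z(g) = Z(C_G(g))$ for some $g \in G \setminus Z(G)$. Since $g$ is noncentral and $\mathfrak{C}$ is a cover, $g$ lies in some $C = C_G(h) \in \mathfrak{C}$; by reciprocity $h \in C_G(g)$, and since every element of $M = Z(C_G(g))$ centralizes all of $C_G(g)$ it in particular centralizes $h$, so $M \le C_G(h) = C$. For the converse, assume every maximal subgroup of $\mathcal{Z}(G)$ lies in some member of $\mathfrak{C}$; then $\mathfrak{C} \ne \emptyset$, so $Z(G)$ is covered (each member of $\mathcal{C}(G)$ contains $Z(G)$), and any noncentral $g$ satisfies $g \in Z(g) \le M \le C$ for a suitable maximal $M \in \mathcal{Z}(G)$ and $C \in \mathfrak{C}$; hence $\mathfrak{C}$ is a cover. (This is the same phenomenon underlying Theorem~\ref{Intro thm 2}.)

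For part (2), I would build an injection from $\mathfrak{C}$ into the set of maximal subgroups of $\mathcal{Z}(G)$. Given $C_0 \in \mathfrak{C}$, irredundancy provides $x_{C_0} \in C_0$ lying in no other member of $\mathfrak{C}$; since an irredundant cover by proper subgroups has at least two members and $Z(G)$ is contained in every member of $\mathcal{C}(G)$, the element $x_{C_0}$ must be noncentral. Choose a maximal $M_{C_0} \in \mathcal{Z}(G)$ with $Z(x_{C_0}) \le M_{C_0}$. By part (1), $M_{C_0} \le C$ for some $C \in \mathfrak{C}$, and since $x_{C_0} \in Z(x_{C_0}) \le M_{C_0} \le C$ while $x_{C_0}$ lies only in $C_0$, we conclude $C = C_0$, i.e. $M_{C_0} \le C_0$. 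The assignment $C_0 \mapsto M_{C_0}$ is then injective: if $M_{C_0} = M_{C_1}$ then $x_{C_1} \in Z(x_{C_1}) \le M_{C_1} = M_{C_0} \le C_0$, which forces $C_1 = C_0$. Therefore $|\mathfrak{C}|$ is at most the number of maximal subgroups of $\mathcal{Z}(G)$.

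The main obstacle is the forward direction of part (1): one must explain why a maximal center $Z(g)$ cannot be ``split'' across several centralizers belonging to the cover. The reciprocity trick combined with the defining property $Z(g) = Z(C_G(g))$ is exactly what rules this out, and once part (1) is in hand part (2) is bookkeeping with the definition of irredundancy and the finiteness of $\mathcal{Z}(G)$.
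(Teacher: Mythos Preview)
Your proof is correct. Part~(1) is essentially the paper's argument: the ``reciprocity trick'' together with $Z(g)=Z(C_G(g))$ is exactly the content of Lemma~\ref{three}, and the paper packages your forward-direction argument as Proposition~\ref{prop:cover_contain_z} (note that your argument there nowhere uses maximality of $M$, so you are really reproving that proposition inline). The converse direction is the same in both write-ups.

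Part~(2) is where you genuinely diverge from the paper. You construct an injection $\mathfrak{C}\to\{\text{maximal members of }\mathcal{Z}(G)\}$ by sending each $C_0$ to a maximal center sitting above the center of an irredundancy witness $x_{C_0}$, and you verify injectivity via the uniqueness property of $x_{C_0}$. The paper instead goes in the opposite direction: for each maximal $Z_i$ it uses~(1) to choose some $C_i\in\mathfrak{C}$ containing $Z_i$, observes that $\{C_1,\dots,C_m\}$ is already a cover by~(1), and then invokes irredundancy to conclude $\mathfrak{C}=\{C_1,\dots,C_m\}$, whence $|\mathfrak{C}|\le m$. The paper's route is slightly shorter and avoids having to argue that the witnesses $x_{C_0}$ are noncentral; your route has the mild advantage that it produces an explicit matching and makes transparent \emph{which} maximal center obstructs removing a given $C_0$ from the cover.
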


In \cite{centgraph}, we introduced a graph on centralizers that is related to the commuting graph.  We call this graph the centralizer graph and we use $\Gamma_\mathcal{Z} (G)$ to denote it.  We will give the formal definition of this graph and more information in Section \ref{secn:graph}.  We have the following connection between dominating sets in this graph and covers of centralizers.

\begin{thm} \label{intro dom}
Let $G$ be a nonabelian group and let $\mathfrak{C}$ be a subset of $\mathcal{C}(G)$.  Then the following are true:
\begin{enumerate}
\item $\mathfrak{C}$ is a cover of $G$ if and only if $\mathfrak{C}$ is a dominating set for $\Gamma_{\mathcal Z} (G)$.
\item $\mathfrak{C}$ is an irredundant cover of $G$ if and only if $\mathfrak{C}$ is a minimal dominating set for $\Gamma_{\mathcal Z} (G)$.
\end{enumerate}
\end{thm}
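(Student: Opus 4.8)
The plan is to unpack the definition of the centralizer graph $\Gamma_{\mathcal Z}(G)$ and translate the covering condition directly into the domination condition, then appeal to Theorem \ref{intro max irredundant} for the bookkeeping. Recall (from Section \ref{secn:graph}, which I am taking as given) that the vertices of $\Gamma_{\mathcal Z}(G)$ are the distinct centralizers $C_G(g)$ for $g \in G \setminus Z(G)$, with adjacency governed by the centers $Z(g) = Z(C_G(g))$: two centralizers are joined (or a centralizer dominates another) precisely when the center of one is contained in the other. So the first step is to record the key reformulation: a subgroup $C \in \mathcal C(G)$ ``dominates'' $C_G(g)$ in $\Gamma_{\mathcal Z}(G)$ if and only if $Z(g) \le C$. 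Granting this, a set $\mathfrak C \subseteq \mathcal C(G)$ is a dominating set if and only if for every $g \in G \setminus Z(G)$ there is some $C \in \mathfrak C$ with $Z(g) \le C$ — equivalently, every element of $\mathcal Z(G)$ is contained in some member of $\mathfrak C$, and since every member of $\mathcal Z(G)$ is contained in a maximal one, this is the same as: every maximal subgroup of $\mathcal Z(G)$ lies in some member of $\mathfrak C$.

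For part (1), I would then simply chain this with Theorem \ref{intro max irredundant}(1), which says $\mathfrak C$ is a cover of $G$ exactly when every maximal subgroup of $\mathcal Z(G)$ lies in some member of $\mathfrak C$. Composing the two equivalences gives: $\mathfrak C$ is a cover $\iff$ $\mathfrak C$ is a dominating set for $\Gamma_{\mathcal Z}(G)$. (If one prefers not to route through \ref{intro max irredundant}, the direct argument is equally short: if $\mathfrak C$ covers $G$, then for $g \in G \setminus Z(G)$ pick any $z \in Z(g) \setminus Z(G)$ — nonempty since $Z(G) < Z(g)$ — and a member $C = C_G(h) \in \mathfrak C$ containing $z$; one checks $C_G(g) \le C_G(z) $ forces... actually more carefully, $z \in Z(C_G(g))$ means $C_G(g) \le C_G(z)$, and $z \in C$ with $C$ abelian-centralizer... this needs the precise adjacency definition, so the clean path is genuinely through the center-containment characterization and \ref{intro max irredundant}.)

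For part (2), the statement ``irredundant cover $\iff$ minimal dominating set'' follows formally once part (1) is in hand, because irredundancy of a cover and minimality of a dominating set are each the assertion that no proper subset retains the property, and ``being a cover'' and ``being a dominating set'' have just been shown to be the same property of a subset $\mathfrak C \subseteq \mathcal C(G)$. So $\mathfrak C$ is an irredundant cover $\iff$ $\mathfrak C$ is a cover but no proper subset is $\iff$ $\mathfrak C$ is a dominating set but no proper subset is $\iff$ $\mathfrak C$ is a minimal dominating set. This step is essentially a tautology given (1).

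The main obstacle is entirely in the first paragraph: getting the translation between graph-adjacency/domination in $\Gamma_{\mathcal Z}(G)$ and the containment $Z(g) \le C$ exactly right, since everything downstream is formal. In particular I need to be careful about whether ``dominating set'' here is taken in the closed-neighborhood sense (a vertex dominates itself), which it must be for the equivalence to work — a singleton $\{C_G(g)\}$ had better dominate $C_G(g)$, corresponding to the trivial containment $Z(g) \le C_G(g)$. Once the definition of $\Gamma_{\mathcal Z}(G)$ is invoked correctly and the closed-neighborhood convention is noted, the proof is a two-line composition of equivalences for (1) and a one-line formal argument for (2).
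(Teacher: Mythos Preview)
Your proposal is correct and follows essentially the same approach as the paper: translate ``dominating set'' into the condition that every $Z(x)$ lies in some member of $\mathfrak{C}$, then invoke the earlier cover characterization; part (2) is the same formal passage from ``same property'' to ``same minimality''. The only minor difference is that the paper cites Proposition~\ref{prop:cover_contain_z} directly (cover $\iff$ every $Z(x)$ lies in some member of $\mathfrak{C}$), whereas you detour through Theorem~\ref{intro max irredundant}(1) via maximal elements of $\mathcal{Z}(G)$---this works but is one step longer than needed, and your own parenthetical about a ``direct argument'' is in fact the paper's route.
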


A particular type of cover is a partition.  In particular, a {\it partition} of a set $S$ is a set of subsets whose union is $S$ and that intersect trivially pairwise.  A {\it group partition} for a group $G$ is a cover where the intersection of any two subsets is the identity.   We recommend the reader consult \cite{zappa} for more information on group partitions.  An $F$-group is a group where there are no proper containments among the subgroups in $\mathcal {C} (G)$.  We obtain another characterization of $F$-groups.

\begin{thm} \label{intro F-group partition}
Let $G$ be a nonabelian group.  Then $G$ is an $F$-group if and only if $G \setminus Z(G)$ is partitioned by $Z(g) \setminus Z(G)$ as $Z(g)$ runs over the subgroups in $\mathcal{Z} (G)$.	
\end{thm}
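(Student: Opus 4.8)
The plan is to reduce the whole statement to one translation lemma. For noncentral $x,y \in G$, I claim that $x \in Z(y)$ if and only if $C_G(y) \le C_G(x)$. Indeed, $x \in Z(y) = Z(C_G(y))$ says exactly that $x$ lies in $C_G(y)$ and commutes with every element of $C_G(y)$; but since $y \in C_G(y)$, the condition ``$x$ commutes with everything in $C_G(y)$'' already forces $x \in C_G(y)$, so $x \in Z(y)$ is equivalent to the inclusion $C_G(y) \subseteq C_G(x)$. Alongside this I would record two elementary facts for noncentral $g$: that $Z(G) \le Z(g)$ (elements of $Z(G)$ commute with all of $C_G(g)$), and that $g \in Z(g)$ (every element of $C_G(g)$ commutes with $g$ by definition of the centralizer). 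The second fact makes each set $Z(g)\setminus Z(G)$ nonempty and shows that the family $\{\,Z(g)\setminus Z(G) : Z(g) \in \mathcal{Z}(G)\,\}$ automatically covers $G\setminus Z(G)$; so in both directions the only thing at issue is pairwise disjointness of distinct members.

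For the forward direction, assume $G$ is an $F$-group. If a noncentral $w$ lies in $(Z(x)\setminus Z(G)) \cap (Z(y)\setminus Z(G))$, then by the lemma $C_G(x) \le C_G(w)$ and $C_G(y) \le C_G(w)$; since an $F$-group has no proper containments in $\mathcal{C}(G)$, we get $C_G(x) = C_G(w) = C_G(y)$, hence $Z(x) = Z(C_G(x)) = Z(C_G(y)) = Z(y)$. So distinct members of the family are disjoint, and we have a partition.

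For the converse, assume the family partitions $G\setminus Z(G)$, and suppose $C_G(y) \le C_G(x)$ with $x,y$ noncentral; I must show the inclusion is an equality. By the lemma $x \in Z(y)$, and as $x$ is noncentral it lies in both pieces $Z(y)\setminus Z(G)$ and $Z(x)\setminus Z(G)$; since these are genuine (nonempty) members of a partition, they coincide, so $Z(x) = Z(y)$, i.e. $Z(C_G(x)) = Z(C_G(y))$. Now the key move: from $y \in Z(C_G(y)) = Z(C_G(x))$ we get that $y$ commutes with every element of $C_G(x)$, so $C_G(x) \le C_G(y)$; combined with $C_G(y)\le C_G(x)$ this gives $C_G(x) = C_G(y)$. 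Hence $\mathcal{C}(G)$ has no proper containments and $G$ is an $F$-group.

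The main obstacle is precisely that last step. Knowing $Z(x) = Z(y)$ only tells us that two centralizers have equal centers, which in general does not force the centralizers themselves to be equal; one has to notice that the extra structural fact $y \in Z(C_G(y))$ is available and lets us upgrade the one-sided inclusion $C_G(y)\le C_G(x)$ to equality. Everything else is routine bookkeeping with the translation lemma and the two elementary observations about $Z(g)$.
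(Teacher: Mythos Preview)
Your argument is correct. The translation lemma you isolate---$x\in Z(y)$ if and only if $C_G(y)\le C_G(x)$---is exactly the equivalence $(1)\Leftrightarrow(4)$ in the paper's Lemma~\ref{star 2}, and with it both directions go through as you describe. One small remark: your closing worry that ``equal centers need not force equal centralizers'' is unfounded in this setting---your own translation lemma, applied to $y\in Z(y)=Z(x)$ and to $x\in Z(x)=Z(y)$, already gives $C_G(x)=C_G(y)$ without needing the hypothesis $C_G(y)\le C_G(x)$ at all (this is the content of Lemma~\ref{star 1}(2)). So that step is not really an obstacle.

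The paper's proof takes a different route. Rather than arguing directly, it invokes two pieces of machinery built up earlier: the equivalence classes $Z^*(g)=\{h:C_G(h)=C_G(g)\}$, which always partition $G\setminus Z(G)$, and Lemma~\ref{part}, which lists several reformulations of the $F$-group condition. For the forward direction the paper observes that in an $F$-group every $Z(g)$ is minimal in $\mathcal Z(G)$, whence $Z(g)\setminus Z(G)=Z^*(g)$ by Lemma~\ref{maximal}, so the desired partition is literally the $Z^*$ partition. For the converse it notes that the partition hypothesis is precisely condition~(3) of Lemma~\ref{part}. Your proof is more elementary and self-contained, essentially unpacking the relevant implications of Lemma~\ref{part} inline; the paper's version is shorter on the page because the work has been amortized into those earlier lemmas.
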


For $F$-groups, we are also able to obtain a count on the number of noncentral centralizers when the group is a nonabelian $p$-group.

\begin{thm} \label{intro $F$-group $p$-group}
In a nonabelian $p$-group that is an $F$-group, the number of noncentral centralizers is congruent to $1$ modulo $p$.     
\end{thm}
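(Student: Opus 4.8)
The plan is to count instead the set $\mathcal{Z}(G)$ of element centers, which is governed by the partition in Theorem~\ref{intro F-group partition}, and to transfer the count to $\mathcal{C}(G)$ by an elementary order argument modulo $p$.

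First I would establish, for an arbitrary $F$-group $G$, that the assignment $C_G(g) \mapsto Z\bigl(C_G(g)\bigr) = Z(g)$ defines a bijection $\phi \colon \mathcal{C}(G) \to \mathcal{Z}(G)$, so in particular $|\mathcal{C}(G)| = |\mathcal{Z}(G)|$. That the rule depends only on the subgroup $C_G(g)$ and not on the chosen $g$, and that it is onto $\mathcal{Z}(G)$, are both immediate from the definition of $\mathcal{Z}(G)$. For injectivity, suppose $g, h \in G \setminus Z(G)$ with $Z\bigl(C_G(g)\bigr) = Z\bigl(C_G(h)\bigr)$. Since $h$ commutes with every element of $C_G(h)$ and $h \in C_G(h)$, we get $h \in Z\bigl(C_G(h)\bigr) = Z\bigl(C_G(g)\bigr)$; hence $h$ centralizes all of $C_G(g)$, so $C_G(g) \le C_G(h)$. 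As $G$ is an $F$-group there are no proper containments among the members of $\mathcal{C}(G)$, so $C_G(g) = C_G(h)$, and $\phi$ is injective.

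Next I would invoke Theorem~\ref{intro F-group partition}: since $G$ is an $F$-group, the sets $W \setminus Z(G)$, as $W$ ranges over the distinct subgroups in $\mathcal{Z}(G)$, partition $G \setminus Z(G)$. Each such $W$ contains the noncentral element exhibiting it, so $Z(G) \lneq W$, while $W = Z\bigl(C_G(g)\bigr) \le C_G(g) \lneq G$. Counting elements gives $|G| - |Z(G)| = \sum_{W \in \mathcal{Z}(G)} \bigl(|W| - |Z(G)|\bigr)$. Writing $|G| = p^{a}$, $|Z(G)| = p^{c}$, and $|W| = p^{b_W}$ with $c < b_W < a$, dividing through by $p^{c}$ rearranges this to $|\mathcal{Z}(G)| = \sum_{W \in \mathcal{Z}(G)} p^{\,b_W - c} - p^{\,a - c} + 1$. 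Every exponent $b_W - c$ and the exponent $a - c$ is a positive integer, so reducing modulo $p$ gives $|\mathcal{Z}(G)| \equiv 1 \pmod p$. Combined with the first step, $|\mathcal{C}(G)| = |\mathcal{Z}(G)| \equiv 1 \pmod p$, which is the assertion.

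The only substantial ingredient is Theorem~\ref{intro F-group partition}; once it is available, the rest is bookkeeping. The point requiring the most attention is the bijection $\mathcal{C}(G) \to \mathcal{Z}(G)$: one must check that nothing beyond the defining property of an $F$-group (no proper containments among centralizers of noncentral elements) is being used, and in particular one should avoid tacitly assuming -- though it happens to be true -- that noncentral centralizers in an $F$-group are abelian. A variant would be to bypass $\mathcal{Z}(G)$ and show directly that two distinct noncentral centralizers of an $F$-group intersect exactly in $Z(G)$, so that $\mathcal{C}(G)$ itself partitions $G \setminus Z(G)$ outside the center; but that route appears to need the abelianness of noncentral centralizers first, so working through $\mathcal{Z}(G)$ is cleaner.
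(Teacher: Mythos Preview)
Your argument is correct and is essentially the paper's own proof: use the partition of $G\setminus Z(G)$ by the sets $Z(g)\setminus Z(G)$ from Theorem~\ref{intro F-group partition}, count, divide by $|Z(G)|$, and reduce modulo $p$. The bijection $\mathcal{C}(G)\to\mathcal{Z}(G)$ is already available for arbitrary nonabelian groups (Corollary~\ref{lem:corres}), so your separate derivation via the $F$-group hypothesis is unnecessary but harmless.

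One correction to your closing commentary: it is \emph{not} true that noncentral centralizers in an $F$-group are abelian. An extraspecial $p$-group of order $p^{5}$ is an $F$-group (every noncentral centralizer has index $p$), yet those centralizers have order $p^{4}$ while every abelian subgroup has order at most $p^{3}$. This is exactly the distinction between $F$-groups and CA-groups exploited in Lemma~\ref{threeb}, Theorem~\ref{thm:F-Gp-Exp}, and Theorem~\ref{intro thm:CA}. Consequently your proposed ``variant'' of showing that distinct members of $\mathcal{C}(G)$ meet only in $Z(G)$ is not just harder to prove---it is false in general.
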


A CA-group is a group where all of the subgroups in $\mathcal {C} (G)$ are abelian.  It is well known that CA-groups are $F$-groups.  We obtain the following characterization of CA-groups.

\begin{thm}\label{intro thm:CA}
Let $G$ be an $F$-group.  We have $\mathcal {C} (G)$ is an irredundant cover of $G$ if and only if $G$ is a CA-group.
\end{thm}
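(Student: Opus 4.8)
The plan is to first translate ``$\mathcal{C}(G)$ is an irredundant cover'' into a purely centralizer-theoretic statement, and then read off both implications of Theorem~\ref{intro thm:CA} from that translation. The key step I would isolate is: \emph{$\mathcal{C}(G)$ is an irredundant cover of $G$ if and only if for every noncentral $g$, every noncentral element of $C_G(g)$ has centralizer equal to $C_G(g)$}; equivalently, any two commuting noncentral elements of $G$ have the same centralizer. (Note that this condition by itself already forces $G$ to be an $F$-group, so the $F$-group hypothesis in the statement is really only doing work in the ``if'' direction, where it follows anyway from $G$ being a CA-group.)

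To prove this reformulation, fix $C = C_G(g) \in \mathcal{C}(G)$. The subgroup $C$ is essential in the cover exactly when some $x \in G$ lies in $C$ and in no other member of $\mathcal{C}(G)$. Such an $x$ cannot be central, since a central element lies in every member and $\mathcal{C}(G)$ has at least two members (as $G = \bigcup \mathcal{C}(G)$ but $G \neq C_G(g)$). Hence $C_G(x) \in \mathcal{C}(G)$, and $x \in C_G(x)$ forces $C_G(x) = C$; then for any noncentral $h \in C = C_G(x)$ we get $x \in C_G(h)$, and as $x$ lies only in $C$ this yields $C_G(h) = C$. Conversely, if every noncentral element of $C = C_G(g)$ has centralizer $C$, then $g$ itself lies only in $C$ among the members of $\mathcal{C}(G)$: if $g \in C_G(h)$ with $h$ noncentral then $h \in C_G(g) = C$, whence $C_G(h) = C$. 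Ranging over all $C \in \mathcal{C}(G)$ gives the equivalence.

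Granting this, the forward direction is immediate: if some $C_G(g)$ were nonabelian, pick $a, b \in C_G(g)$ with $ab \neq ba$; both are noncentral (each commutes with $g$, not with the other) and lie in $C_G(g)$, so the reformulation gives $C_G(a) = C_G(g) = C_G(b)$, and then $b \in C_G(g) = C_G(a)$ contradicts $ab \neq ba$. Thus $G$ is a CA-group. For the reverse direction, assume $G$ is a CA-group; then each $C_G(g)$ is abelian, so for noncentral $x \in C_G(g)$ we have $C_G(g) \le C_G(x)$, while $g \in C_G(g) \le C_G(x)$ together with $C_G(x)$ abelian gives $C_G(x) \le C_G(g)$; hence $C_G(x) = C_G(g)$, and the reformulation shows $\mathcal{C}(G)$ is irredundant. (Alternatively, a CA-group is an $F$-group with $Z(g) = C_G(g)$, so $\mathcal{Z}(G) = \mathcal{C}(G)$ with every element maximal and Theorem~\ref{Intro thm 2} applies; or Theorem~\ref{intro F-group partition} shows $\mathcal{C}(G)$ partitions $G$ modulo $Z(G)$, and a partition into subgroups is always irredundant.)

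I expect the reformulation step to be the only real obstacle: one must correctly argue that a witness $x$ to the essentiality of $C$ satisfies $C_G(x) = C$, and then propagate this equality to \emph{all} noncentral elements of $C$. Once that bookkeeping is in place, both implications of the theorem are one-line consequences.
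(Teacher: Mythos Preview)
Your proof is correct and takes a genuinely different route from the paper's. The paper argues each direction using the $\mathcal{Z}(G)$ machinery built up earlier: for $(\Leftarrow)$ it notes $C_G(a)=Z(a)$ in a CA-group, so $\mathcal{C}(G)=\mathcal{Z}(G)$ consists entirely of maximal centers, and then invokes Theorem~\ref{max centers irredundant}; for $(\Rightarrow)$ it argues the contrapositive by picking a nonabelian $C_G(a_1)$, finding $b\in C_G(a_1)\setminus Z(a_1)$, locating $b$ in some other equivalence class $Z^*(a_n)$, and then using Proposition~\ref{prop:cover_contain_z} to show that $\mathcal{C}(G)\setminus\{C_G(a_n)\}$ still covers $G$. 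Your approach bypasses all of this by first proving the clean reformulation ``$\mathcal{C}(G)$ is irredundant iff any two commuting noncentral elements share a centralizer,'' from which both implications are essentially one line. Your argument is more elementary and self-contained (it never touches $Z(g)$, $Z^*(g)$, or the earlier cover criteria), and as you correctly observe it actually proves the stronger statement that for \emph{any} nonabelian $G$ one has $\mathcal{C}(G)$ irredundant $\Leftrightarrow$ $G$ is a CA-group---the $F$-group hypothesis is superfluous. The paper's approach, on the other hand, illustrates how the result fits into the $\mathcal{Z}(G)$/$\mathcal{C}(G)$ framework it has been developing.
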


We want to close this introduction by mentioning that the genesis of this paper came when the authors attended a talk at the 2025 Zassenhaus Group Theory conference by Matthew Klepadlo who reported on work about covering dihedral and permutation groups with centralizers.  (See \cite{zass}.)  We should also mention the paper \cite{HaAm} which also considers various questions regarding covers by centralizers.  We would like to thank Tuval Foguel for reminding us about the paper by Haji and Amiri and to Matthew for inspiring us to work on this problem.

\section {Centers and Centralizers}


In this section, we review a number of results that have been proven in our previous papers.  The following lemma has been proved in \cite{max abel}.

\begin{lemma} \label{three}
Let $G$ be a group and suppose $a,b \in G \setminus Z(G)$.
\begin{enumerate}
\item $a \in C_G(b)$ if and only if $Z(a) \le C_G (b)$.
\item $Z(a) \le C_G (b)$ if and only if $Z(b) \le C_G (a)$.
\end{enumerate}
\end{lemma}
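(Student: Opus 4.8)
The plan is to deduce both parts from one elementary observation: for every $g \in G$ we have $g \in Z(C_G(g))$, because $g$ centralizes $C_G(g)$ by the very definition of the centralizer while also lying in $C_G(g)$. In the notation of the paper this reads $g \in Z(g)$ for each $g \in G \setminus Z(G)$. A second, purely formal ingredient is the trivial symmetry $a \in C_G(b) \iff ab = ba \iff b \in C_G(a)$.

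For part (1), I would argue as follows. Suppose first that $Z(a) \le C_G(b)$. Since $a \in Z(a)$ by the observation above, we get $a \in C_G(b)$ immediately. Conversely, suppose $a \in C_G(b)$, equivalently $b \in C_G(a)$. Pick any $z \in Z(a) = Z(C_G(a))$; then $z$ commutes with every element of $C_G(a)$, and in particular with $b$, so $z \in C_G(b)$. As $z$ was arbitrary, $Z(a) \le C_G(b)$, which finishes the equivalence.

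For part (2), I would simply chain equivalences. By part (1) applied to the ordered pair $(a,b)$, the statement $Z(a) \le C_G(b)$ is equivalent to $a \in C_G(b)$; by the symmetry noted above this is equivalent to $b \in C_G(a)$; and by part (1) applied to the pair $(b,a)$ this is equivalent to $Z(b) \le C_G(a)$. Concatenating these gives the claim.

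There is no genuine obstacle here — the argument is short and formal. The only points requiring a little care are recording the fact $g \in Z(C_G(g))$, which is where the definition $Z(g) = Z(C_G(g))$ enters in an essential way, and noting that $a, b \notin Z(G)$ ensures $Z(a)$ and $Z(b)$ are bona fide members of $\mathcal{Z}(G)$, even though the chain of equivalences itself would go through verbatim without that hypothesis.
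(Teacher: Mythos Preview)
Your argument is correct: the key observation $g \in Z(C_G(g))$ gives the backward direction of (1) immediately, the forward direction follows because any $z \in Z(C_G(a))$ commutes with $b$ once $b \in C_G(a)$, and (2) is then the formal chain you wrote down. Note, however, that the paper does not supply its own proof of this lemma---it simply cites \cite{max abel}---so there is no in-paper argument to compare against; your proof is the standard elementary one and would serve perfectly well as a self-contained justification.
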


The following lemma is partly an immediate observation and partly folklore that we gave a proof of in \cite{max abel}.

\begin{lemma} \label{abel cent}
Let $G$ be a group and suppose $a \in G \setminus Z(G)$.  Then the following are equivalent:
\begin{enumerate}
\item $C_G (a) = Z (a)$.
\item $C_G (a)$ is abelian.
\item $C_G (a)$ is maximal among abelian subgroups of $G$.
\end{enumerate}
\end{lemma}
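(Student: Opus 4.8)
The plan is to prove the chain of equivalences by establishing the two separate equivalences (1) $\Leftrightarrow$ (2) and (2) $\Leftrightarrow$ (3), taking (2) as the convenient hub. I expect the equivalence of (1) and (2) to be essentially a restatement of definitions: by definition $Z(a) = Z(C_G(a))$, and for any group $H$ one has $H = Z(H)$ if and only if $H$ is abelian. Applying this standard fact with $H = C_G(a)$ shows that $C_G(a) = Z(a)$ holds exactly when $C_G(a)$ is abelian, so there is no real work here beyond invoking that a group coincides with its own center precisely when it is abelian.

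For (2) $\Leftrightarrow$ (3), the implication (3) $\Rightarrow$ (2) is immediate, since a subgroup that is maximal among the abelian subgroups of $G$ is in particular abelian. The substantive direction is (2) $\Rightarrow$ (3), which I would argue as follows. Assume $C_G(a)$ is abelian, and let $A$ be any abelian subgroup of $G$ with $C_G(a) \le A$. The key observation is that $a$ itself lies in $C_G(a)$, so $a \in A$; since $A$ is abelian, every element of $A$ commutes with $a$, which says precisely that $A \le C_G(a)$. Hence $A = C_G(a)$, so no abelian subgroup properly contains $C_G(a)$, and maximality follows.

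I do not anticipate a serious obstacle: the entire argument turns on the single observation that $a \in C_G(a)$, which forces any abelian overgroup of $C_G(a)$ back inside $C_G(a)$. The hypothesis $a \in G \setminus Z(G)$ guarantees that $C_G(a)$ is a proper subgroup, so the statement is not vacuous, but that hypothesis plays no role in the equivalences themselves. The one point to record carefully is the elementary fact that a group equals its center exactly when it is abelian, since this underlies the passage between (1) and (2).
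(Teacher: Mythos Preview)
Your argument is correct: the passage from (1) to (2) is exactly the tautology that a group equals its center iff it is abelian, and your (2) $\Rightarrow$ (3) via the observation $a \in C_G(a) \le A$ forcing $A \le C_G(a)$ is the standard and complete proof. The paper itself does not give a proof of this lemma; it simply records the statement as folklore and cites the authors' earlier paper \cite{max abel} for a proof, so there is no in-paper argument to compare against.
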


It is not difficult to see that every element in $G \setminus Z(G)$ lies in some $Z(g)$.  We now show that the centralizers of $G$ are covered by these sets.  This is proved in \cite{centgraph}.

\begin{corollary} \label{centr equal union cents}
Let $G$ be a group, and let $a \in G \setminus Z(G)$.  Then $\displaystyle C_G (a) = \bigcup_{b \in C_G(a) \setminus Z(G)} Z(b)$.
\end{corollary}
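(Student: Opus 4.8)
The plan is to prove the two inclusions separately, and both turn out to be short given the lemmas already recorded. For the inclusion $\bigcup_{b \in C_G(a)\setminus Z(G)} Z(b) \subseteq C_G(a)$, fix $b \in C_G(a)\setminus Z(G)$. Then $b$ commutes with $a$, so $a \in C_G(b)$; since $a$ and $b$ are both noncentral, Lemma \ref{three}(1) gives $Z(a) \le C_G(b)$, and then Lemma \ref{three}(2) gives $Z(b) \le C_G(a)$. As $b$ was an arbitrary element of $C_G(a)\setminus Z(G)$, the union of all such $Z(b)$ is contained in $C_G(a)$.

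For the reverse inclusion, I would take $x \in C_G(a)$ and exhibit a subgroup $Z(b)$ appearing in the union that contains $x$, splitting into two cases. If $x \notin Z(G)$, take $b = x$: then $x \in C_G(a)\setminus Z(G)$, and $x$ centralizes $C_G(x)$ by definition, so $x \in Z(C_G(x)) = Z(x)$, as required. If instead $x \in Z(G)$, take $b = a$: here $a \in C_G(a)\setminus Z(G)$ because $a$ is noncentral and commutes with itself, and $Z(G) \le Z(C_G(a)) = Z(a)$ since central elements lie in $C_G(a)$ and commute with everything in it, so $x \in Z(a)$. In either case $x$ lies in $\bigcup_{b \in C_G(a)\setminus Z(G)} Z(b)$, which finishes the argument.

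I do not expect a genuine obstacle here; the statement is essentially a bookkeeping consequence of Lemma \ref{three} together with the elementary facts $x \in Z(C_G(x))$ and $Z(G) \le Z(C_G(a))$. The only point deserving a word of care is that the index set $C_G(a)\setminus Z(G)$ is nonempty — it always contains $a$ — so the union is well posed and the case $x \in Z(G)$ can indeed be absorbed by using $b = a$.
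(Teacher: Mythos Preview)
Your proof is correct. The paper does not actually supply its own argument for this corollary; it simply records the statement and cites \cite{centgraph} for the proof, so there is no in-paper proof to compare against. Your argument is the natural one and is exactly what one would expect: the containment $Z(b)\le C_G(a)$ follows from Lemma~\ref{three} (in fact part~(1) alone suffices, applied with the roles of $a$ and $b$ interchanged, though your route through both parts is equally valid), and the reverse inclusion is handled by the tautology $x\in Z(x)$ together with $Z(G)\le Z(a)$, with your observation that $a$ itself witnesses nonemptiness of the index set.
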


The next lemma is key for understanding the relationships between centers and centralizers.  This is proved in \cite{centgraph}. 

\begin{lemma} \label{star 1}
Let $G$ be a group and let $g, h \in G \setminus Z(G)$.  Then 
\begin{enumerate}
\item $C_G (g) \le C_G (h)$ if and only if $Z(h) \le Z(g)$.
\item $C_G (g) = C_G (h)$ if and only if $Z (g) = Z(h)$.
\end{enumerate}
\end{lemma}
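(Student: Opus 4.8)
The plan is to establish part (1) directly from the definitions of centralizer and center, and then to read off part (2) by antisymmetry of the containment order. The one observation that makes everything go through is that each noncentral element lies in its own center: for any $g \in G \setminus Z(G)$, every $y \in C_G(g)$ commutes with $g$, so $g$ commutes with all of $C_G(g)$, that is, $g \in Z(C_G(g)) = Z(g)$. I would record this self-centralizing fact first, since it drives the harder of the two implications.

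For the forward implication of part (1), I would assume $C_G(g) \le C_G(h)$ and take an arbitrary $z \in Z(h) = Z(C_G(h))$. Since $z$ centralizes all of $C_G(h)$ and $C_G(g) \le C_G(h)$, the element $z$ centralizes every element of $C_G(g)$; in particular $z$ centralizes $g$, so $z \in C_G(g)$, and therefore $z \in Z(C_G(g)) = Z(g)$. This yields $Z(h) \le Z(g)$. For the reverse implication, I would assume $Z(h) \le Z(g)$ and invoke the self-centralizing observation: $h \in Z(h) \le Z(g) = Z(C_G(g))$, so $h$ centralizes all of $C_G(g)$. Hence each $x \in C_G(g)$ commutes with $h$, giving $x \in C_G(h)$, and so $C_G(g) \le C_G(h)$.

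Part (2) then follows formally from part (1): the equality $C_G(g) = C_G(h)$ holds exactly when $C_G(g) \le C_G(h)$ and $C_G(h) \le C_G(g)$ hold simultaneously, which by part (1) is equivalent to $Z(h) \le Z(g)$ together with $Z(g) \le Z(h)$, i.e. to $Z(g) = Z(h)$.

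The argument is short, so there is no deep obstacle; the only point that requires care is the contravariance, namely that a larger centralizer corresponds to a smaller center, and keeping the two inclusions $C_G(g) \le C_G(h)$ and $Z(h) \le Z(g)$ correctly paired throughout. An alternative route could be built from Lemma \ref{three}, but the direct verification above is cleaner and entirely self-contained.
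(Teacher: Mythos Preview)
Your proof is correct. The paper does not actually prove this lemma here; it merely records it and cites \cite{centgraph}, so there is no in-paper argument to compare against. Your direct verification from the definitions, using the observation $g \in Z(g)$ together with the contravariance between centralizers and their centers, is clean and self-contained, and your remark that one could alternatively derive it from Lemma~\ref{three} is in the spirit of how the surrounding results are organized.
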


Notice that Lemma \ref{star 1} (2) gives us a bijection between $\mathcal{Z} (G)$ and $\mathcal {C} (G)$, and Lemma \ref{star 1} (1) implies that this bijection is containment reversing.  This is written formally in this next corollary.  See \cite{centgraph}.

\begin{corollary}\label{lem:corres}
Let $G$ be a group. 
Then the map $\phi : \mathcal {C} (G) \rightarrow \mathcal {Z} (G)$ given by 
$$\phi(X) = Z (X) = C_G (X)$$
is an order-reversing $1$-to-$1$ correspondence with $\phi^{-1} : \mathcal {Z} (G) \rightarrow \mathcal {C} (G)$ given by $$\phi^{-1} (Y) = {C}_G (Y).$$
\end{corollary}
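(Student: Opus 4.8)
The plan is to read off essentially everything from Lemma \ref{star 1}, since that lemma already encodes the correspondence; what remains is to pin down the map on representatives and to verify the two displayed set-theoretic identities $\phi(X) = Z(X) = C_G(X)$ and $\phi^{-1}(Y) = C_G(Y)$. First I would define $\phi$ on a representative by $\phi(C_G(g)) = Z(g)$ for $g \in G \setminus Z(G)$, where $Z(g) = Z(C_G(g))$.

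Well-definedness is immediate from Lemma \ref{star 1}(2): if $C_G(g) = C_G(h)$ then $Z(g) = Z(h)$, so the value does not depend on the choice of $g$, and in particular $\phi$ lands in $\mathcal{Z}(G)$. The same lemma gives injectivity, since $Z(g) = Z(h)$ forces $C_G(g) = C_G(h)$. Surjectivity is automatic because every member of $\mathcal{Z}(G)$ is by definition of the form $Z(g)$ for some noncentral $g$, and $Z(g) = \phi(C_G(g))$. For the order-reversal, Lemma \ref{star 1}(1) states $C_G(g) \le C_G(h)$ iff $Z(h) \le Z(g)$, which is exactly the statement that $\phi$ reverses containment. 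The candidate inverse $\psi : \mathcal{Z}(G) \to \mathcal{C}(G)$, $\psi(Z(g)) = C_G(g)$, is well-defined by the same Lemma \ref{star 1}(2), and $\psi \circ \phi$ and $\phi \circ \psi$ are visibly the respective identities, so $\phi^{-1} = \psi$.

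What still needs checking are the intrinsic descriptions of the two maps as honest centralizers in $G$. The key observation is that $g \in Z(C_G(g))$: every element of $C_G(g)$ commutes with $g$, so $g$ lies in the center of its own centralizer. Given this, I would verify $Z(C_G(g)) = C_G(C_G(g))$ by double inclusion, using that an element of $Z(C_G(g))$ commutes with all of $C_G(g)$ while conversely any $x \in C_G(C_G(g))$ commutes with $g$, hence lies in $C_G(g)$ and then centralizes it. Writing $X = C_G(g)$, this is precisely $Z(X) = C_G(X)$. Similarly I would establish $C_G(Z(g)) = C_G(g)$: the inclusion $C_G(Z(g)) \le C_G(g)$ follows from $g \in Z(g)$, and the reverse inclusion holds because any $x \in C_G(g)$ commutes with every $z \in Z(g) = Z(C_G(g))$, such $z$ being central in $C_G(g)$. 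Taking $Y = Z(g)$ this reads $\phi^{-1}(Y) = C_G(Y)$.

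The argument is almost entirely bookkeeping: once Lemma \ref{star 1} is in hand, the bijection and order-reversal are automatic, and the only genuine content is the pair of centralizer identities. I expect the mildest obstacle to be nothing more than keeping the double-inclusion arguments honest and invoking $g \in Z(C_G(g))$ at the right moments; there is no deeper difficulty, since the corollary is essentially a restatement of the preceding lemma together with the self-centralizing property of element centers.
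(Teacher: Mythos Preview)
Your proposal is correct and follows the same approach the paper indicates: the text preceding the corollary already notes that Lemma~\ref{star 1}(2) gives the bijection and Lemma~\ref{star 1}(1) gives the order-reversal, and the paper then simply cites \cite{centgraph} rather than writing out a proof. Your argument supplies exactly the details one would expect in that reference, including the verification of the intrinsic formulas $Z(X) = C_G(X)$ and $\phi^{-1}(Y) = C_G(Y)$, which the paper does not spell out here.
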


We note that $\mathcal {Z} (G)$ and $\mathcal {C} (G)$ are not usually lattices, but we obtain the following for intersections.  The following is proved in \cite{centgraph}.

\begin{lemma} \label{intersection}
Let $G$ be a group and let $g, h \in G \setminus Z(G)$.   Then either $Z(g) \cap Z(h) = Z(G)$ or $(Z(g) \cap Z(h)) = \prod Z(c)$ where $c$ runs over the elements in $(Z (g) \cap Z(h)) \setminus Z(G)$.
\end{lemma}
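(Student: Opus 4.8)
The plan is to write $D = Z(g) \cap Z(h)$, observe that $D$ is a subgroup with $Z(G) \le D$ (since $Z(G) \le Z(g)$ and $Z(G) \le Z(h)$ by the definition of the element centers), and then split on the dichotomy directly. If $D = Z(G)$ we are in the first alternative and there is nothing to prove, so I would assume $D \ne Z(G)$, i.e.\ $D \setminus Z(G) \ne \emptyset$, and aim to show $D = \prod_{c \in D \setminus Z(G)} Z(c)$ by proving the two inclusions.

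The key observation for the easy inclusion is that each noncentral element of $D$ has its center trapped inside $D$. Fix $c \in D \setminus Z(G)$. Since $c \in Z(g) = Z(C_G(g))$, the element $c$ commutes with every element of $C_G(g)$, so $C_G(g) \le C_G(c)$; by Lemma \ref{star 1}(1) this is equivalent to $Z(c) \le Z(g)$. Running the identical argument with $h$ gives $Z(c) \le Z(h)$, and therefore $Z(c) \le Z(g) \cap Z(h) = D$. This holds for every $c \in D \setminus Z(G)$, so $\prod_{c \in D \setminus Z(G)} Z(c) \le D$.

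For the reverse inclusion I would show that every element of $D$ already lies in one of the factors. A noncentral element $d \in D \setminus Z(G)$ lies in the center of its own centralizer, since $d \in C_G(d)$ and $d$ commutes with all of $C_G(d)$; hence $d \in Z(C_G(d)) = Z(d)$, one of the factors. A central element $d \in Z(G)$ lies in $Z(c_0)$ for any fixed $c_0 \in D \setminus Z(G)$ (such a $c_0$ exists precisely because we are in the second case), as $Z(G) \le Z(c_0)$. Thus $D \subseteq \bigcup_{c \in D \setminus Z(G)} Z(c) \le \prod_{c \in D \setminus Z(G)} Z(c)$, and combining the two inclusions yields the claimed equality.

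There is no serious obstacle here once Lemma \ref{star 1} is available; the only point needing care is the translation $c \in Z(g) \Rightarrow C_G(g) \le C_G(c)$, which is immediate from $Z(g) = Z(C_G(g))$. It is worth noting that the argument actually establishes more: the union $\bigcup_{c \in D \setminus Z(G)} Z(c)$ already equals $D$, so the ``product'' in the statement is in fact a union that happens to be closed under the group operation, which is the structurally meaningful content of the lemma and is what makes the interpretation of $\prod$ immaterial.
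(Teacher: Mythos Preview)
Your argument is correct. The paper does not actually prove Lemma~\ref{intersection}; it only states it and refers to \cite{centgraph} for the proof, so there is no in-paper argument to compare against. Your route---showing $Z(c)\le Z(g)\cap Z(h)$ for each noncentral $c$ in the intersection via $c\in Z(g)\Rightarrow C_G(g)\le C_G(c)\Rightarrow Z(c)\le Z(g)$ (Lemma~\ref{star 1}(1)), and then observing that every element of $D$ already lies in some $Z(c)$---is the natural one and is exactly the kind of computation that underlies Corollary~\ref{centr equal union cents} as well. Your closing remark that the product is in fact a union is a genuine sharpening worth recording.

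One cosmetic point: you invoke Lemma~\ref{star 1}(1) for the implication $C_G(g)\le C_G(c)\Rightarrow Z(c)\le Z(g)$, which is fine since that lemma precedes Lemma~\ref{intersection} in the paper; alternatively one can argue directly that any $x\in Z(c)=Z(C_G(c))$ centralizes $C_G(c)\supseteq C_G(g)$ and hence lies in $Z(C_G(g))=Z(g)$, avoiding the forward reference entirely.
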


\section {Covers by centers and centralizers}

In this section, we prove the results regarding the covers by centralizers and centers of elements.


\begin{proposition}\label{prop:cover_contain_z}
Let $G$ be a nonabelian group, and let $\mathfrak{C}$ be a subset of $\mathcal{C} (G)$.  Then $\mathfrak{C}$ is a cover of $G$ if and only if for each $x \in G \setminus {Z}(G)$, the center ${Z}(x)$ is contained in some member of $\mathfrak{C}$.
\end{proposition}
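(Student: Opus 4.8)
The plan is to prove the two implications separately and directly, using Lemma~\ref{three}(1) for the forward direction and the elementary observation that a noncentral element lies in its own center for the backward one.

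For the forward direction, suppose $\mathfrak{C}$ is a cover of $G$ and fix $x \in G \setminus Z(G)$. Since $\mathfrak{C}$ covers $G$, the element $x$ lies in some member $C_G(g)$ of $\mathfrak{C}$. Then $x \in C_G(g)$, and Lemma~\ref{three}(1) immediately yields $Z(x) \le C_G(g)$, which is exactly the containment we want.

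For the backward direction, I would first record two easy facts: (i) $Z(G) \le C_G(g)$ for every $g \in G \setminus Z(G)$, since central elements commute with everything; and (ii) $x \in Z(x)$ for every $x \in G \setminus Z(G)$, since $x$ is central in $C_G(x)$ and $Z(x) = Z(C_G(x))$. I also note that $\mathfrak{C}$ is nonempty: because $G$ is nonabelian there is some noncentral $x$, and the hypothesis then supplies a member of $\mathfrak{C}$ containing $Z(x)$. Now take an arbitrary $x \in G$. If $x \in Z(G)$, then by (i) it lies in any member of $\mathfrak{C}$. If $x \notin Z(G)$, then by hypothesis $Z(x)$ is contained in some member $C$ of $\mathfrak{C}$, and by (ii) we get $x \in Z(x) \le C$. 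In either case $x$ lies in a member of $\mathfrak{C}$, so $\mathfrak{C}$ is a cover of $G$.

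I do not anticipate a genuine obstacle here; the statement is essentially a bookkeeping consequence of Lemma~\ref{three}(1) together with the relation between an element and its center. The only points that need a moment's care are the degenerate cases in the backward direction — confirming that central elements are covered and that $\mathfrak{C}$ is nonempty — and both are dispatched at once by facts (i) and (ii).
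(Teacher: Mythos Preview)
Your proof is correct and follows essentially the same approach as the paper: Lemma~\ref{three}(1) for the forward implication, and the observations $Z(G)\le C_G(g)$ and $x\in Z(x)$ for the backward implication. You are slightly more careful than the paper in explicitly noting that $\mathfrak{C}$ is nonempty, which the paper uses tacitly.
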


\begin{proof}
($\Leftarrow$) A centralizer contains ${Z}(G)$, and so for each $z \in {Z}(G)$, $z$ is in every member of $\mathfrak{C}$.  Note that for each element $x \in G \setminus {Z}(G)$, we have $x \in {Z}(x)$, and hence, $x$ is in a member of $\mathfrak{C}$.  Thus, $\bigcup \mathfrak{C} = G$ and $\mathfrak{C}$ is a cover.
	
($\Rightarrow$) Suppose $\mathfrak{C}$ is a cover of $G$.  For each element $x \in G \setminus {Z}(G)$, we have $x$ is in some ${C}_G (a) \in \mathfrak{C}$.  If follows by Lemma \ref{three} that for each $x \in G \setminus {Z}(G)$ that ${Z}(x)$ is contained in some ${C}_G(a) \in \mathfrak{C}$.  
\end{proof}

Note that Lemma \ref{star 1} (1) implies that $C_G (x)$ is maximal in $\mathcal{C} (G)$ if and only if $Z(x)$ is minimal in $\mathcal {Z} (G)$, and $C_G (x)$ is minimal in $\mathcal{C} (G)$ if and only if $Z(x)$ is maximal in $\mathcal {Z} (G)$.

The following theorem includes Theorem \ref{Intro thm 1} and more.

\begin{theorem}
Let $G$ be a nonabelian group.  Then the following are true:
\begin{enumerate}
\item The maximal elements in $\mathcal {C} (G)$ form a cover of $G$.
\item The maximal elements in $\mathcal {Z} (G)$ form a cover of $G$.
\item The minimal elements in $\mathcal {C} (G)$ form a cover of $G$.
\end{enumerate}
\end{theorem}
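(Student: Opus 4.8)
The plan is to use that $\mathcal{C}(G)$ and $\mathcal{Z}(G)$ are finite posets under containment, so every member lies below some maximal member and above some minimal member, together with two elementary facts from the previous section: every centralizer and every element center contains $Z(G)$, and every $x \in G \setminus Z(G)$ lies in both $C_G(x)$ and $Z(x)$. Both $\mathcal{C}(G)$ and $\mathcal{Z}(G)$ are nonempty because $G$ is nonabelian.

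For (1) I would fix $x \in G \setminus Z(G)$, choose (by finiteness) a maximal $M \in \mathcal{C}(G)$ with $C_G(x) \le M$, and observe $x \in C_G(x) \le M$, while every $z \in Z(G)$ lies in $M$ since $M$ is a centralizer. Hence the maximal members of $\mathcal{C}(G)$ cover $G$. Part (2) is the same argument verbatim with $\mathcal{Z}(G)$ in place of $\mathcal{C}(G)$ and $Z(x)$ in place of $C_G(x)$, using $Z(G) \le Z(g)$ for every noncentral $g$.

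For (3), rather than trying to pick ``a minimal centralizer containing $x$'' directly, I would route through part (2) using the order-reversing bijection of Corollary \ref{lem:corres} (equivalently, the remark just before the theorem): $C_G(y)$ is minimal in $\mathcal{C}(G)$ exactly when $Z(y)$ is maximal in $\mathcal{Z}(G)$, so the minimal members of $\mathcal{C}(G)$ are precisely the $C_G(y)$ with $Z(y)$ maximal in $\mathcal{Z}(G)$. Given $x \in G \setminus Z(G)$, part (2) produces a maximal $Z(y) \in \mathcal{Z}(G)$ with $x \in Z(x) \le Z(y)$; since $Z(y) = Z(C_G(y)) \le C_G(y)$, we get $x \in C_G(y)$, and $Z(G) \le C_G(y)$ as always, so the minimal members of $\mathcal{C}(G)$ cover $G$.

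The one place needing care is (3): a centralizer minimal among those containing a fixed $x$ need not be minimal in $\mathcal{C}(G)$, since a descending chain below it in $\mathcal{C}(G)$ may fail to contain $x$. Passing to maximal element centers removes this difficulty, because $x \in Z(x) \le Z(y)$ automatically forces $x$ into the matching minimal centralizer $C_G(y)$ (one can also see $x \in C_G(y)$ directly from Lemma \ref{three}, since $Z(x) \le Z(y) \le C_G(y)$ gives $x \in C_G(y)$). Thus the main obstacle is really just identifying the correct indexing set for the minimal centralizers, which Corollary \ref{lem:corres} supplies.
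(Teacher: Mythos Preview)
Your proposal is correct and follows essentially the same approach as the paper: parts (1) and (2) use that every member of a finite poset lies below a maximal one, and part (3) routes through the maximal element centers via the order-reversing bijection, noting $Z(y)\le C_G(y)$. You spell out a few more details (and flag the potential pitfall in (3) that the paper does not mention), but the strategy is identical.
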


\begin{proof}
We know that $\mathcal{C} (G)$ gives a cover of $G$.   Since every subgroup in $\mathcal{C} (G)$ lies in a subgroup that is maximal in $\mathcal{C} (G)$, the maximal subgroups in $\mathcal {C} (G)$ are sufficient for a cover.  Similarly, we know that $\mathcal{Z} (G)$ gives a cover of $G$, and so, the maximal subgroups of $\mathcal {Z} (G)$ are sufficient to form a cover.  Finally, each maximal subgroup of $\mathcal{Z} (G)$ is the center of a minimal subgroup of $\mathcal{C} (G)$, so the minimal elements of $\mathcal{C} (G)$ will form a cover.
\end{proof}

We next make an observation about centralizers and maximal centers.

\begin{lemma} \label{maximal containnment}
Let $G$ be a nonabelian group and let $x \in G \setminus Z (G)$.  Then $C_G (x)$ contains some maximal subgroup $Z(y)$ among the subgroups in $\mathcal{Z} (G)$.
\end{lemma}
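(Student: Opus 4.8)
The plan is to start from the trivial containment $Z(x) \le C_G(x)$, replace $Z(x)$ by a maximal element of $\mathcal{Z}(G)$ lying above it, and then check that this maximal element is still trapped inside $C_G(x)$. The mechanism is the order-reversing correspondence of Corollary \ref{lem:corres}: enlarging a member of $\mathcal{Z}(G)$ corresponds to shrinking a member of $\mathcal{C}(G)$, and a member of $\mathcal{C}(G)$ always contains its own center.

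First I would observe that $Z(x) \le C_G(x)$. This is immediate from Lemma \ref{three}(1) applied with $a = b = x$ (since $x \in C_G(x)$), or simply from the fact that $Z(x) = Z(C_G(x))$ is the center of the group $C_G(x)$, hence a subgroup of it. Next, since $\mathcal{Z}(G)$ is a finite poset under containment, there is a member $Z(y) \in \mathcal{Z}(G)$ with $Z(x) \le Z(y)$ that is maximal among members of $\mathcal{Z}(G)$ containing $Z(x)$. Such a $Z(y)$ is in fact maximal in all of $\mathcal{Z}(G)$: if $W \in \mathcal{Z}(G)$ and $Z(y) \le W$, then $Z(x) \le W$ as well, so $W = Z(y)$ by the choice of $Z(y)$.

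It then remains to verify $Z(y) \le C_G(x)$. From $Z(x) \le Z(y)$ together with Lemma \ref{star 1}(1) we obtain $C_G(y) \le C_G(x)$. But $Z(y) = Z(C_G(y))$ is the center of $C_G(y)$, so $Z(y) \le C_G(y) \le C_G(x)$, which is exactly what we want, with the maximal subgroup of $\mathcal{Z}(G)$ in question being $Z(y)$.

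I do not expect a genuine obstacle here; the only point requiring a moment's care is the assertion that a member of $\mathcal{Z}(G)$ maximal above $Z(x)$ is actually a maximal element of $\mathcal{Z}(G)$, which follows at once from transitivity of containment. The real content is just the dictionary recorded in Lemma \ref{star 1}: moving up in $\mathcal{Z}(G)$ is the same as moving down in $\mathcal{C}(G)$, and every subgroup in $\mathcal{C}(G)$ contains its center.
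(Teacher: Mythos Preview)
Your proof is correct and is essentially the paper's argument run on the $\mathcal{Z}(G)$ side of the order-reversing bijection rather than the $\mathcal{C}(G)$ side: the paper picks a minimal $C_G(y)\in\mathcal{C}(G)$ inside $C_G(x)$, notes that $Z(y)$ is then maximal in $\mathcal{Z}(G)$, and concludes $Z(y)\le C_G(y)\le C_G(x)$; you pick a maximal $Z(y)\in\mathcal{Z}(G)$ above $Z(x)$, use Lemma~\ref{star 1}(1) to get $C_G(y)\le C_G(x)$, and finish with the same chain $Z(y)\le C_G(y)\le C_G(x)$. The two are the same argument via Corollary~\ref{lem:corres}.
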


\begin{proof}
We know that $C_G (x)$ must contain a minimal subgroup $C_G (y)$ in $\mathcal {C} (G)$.  This implies that $Z(y)$ is maximal in $\mathcal{Z} (G)$.  We see that $Z(y) \le C_G (y) \le C_G (x)$.  This proves the lemma.
\end{proof}

We now consider a useful equivalence relation.  We define this equivalence relation on $G \setminus Z(G)$.  We say $g \bowtie h \in G \setminus Z(G)$ if $Z(g) = Z(h)$.  It is not difficult to see that $\bowtie$ is an equivalence relation on $G \setminus Z(G)$.  If we set $Z^* (g) = \{ h \in G \setminus Z(G) \mid Z(h) = Z(g) \}$, then $Z^* (g)$ is the equivalence class of $g$ under this equivalence relation.   

We can define another equivalence relation on $G \setminus Z(G)$ by $g \sim h$ if $C_G (g) = C_G (h)$.  By Lemma \ref{star 1}, $\bowtie$ is exactly the equivalence relation $\sim$, and so $Z^* (g) = \{h \in G \setminus Z(G) \mid C_G (h) = C_G (g)\}$.  Observe from the definition that $Z^* (g) \subseteq Z(g) \setminus Z(G)$.

\begin{lemma} \label{star 2}
Let $G$ be a nonabelian group and $g \in G \setminus Z(G)$.  If $g_1, \dots, g_t \in G \setminus Z(G)$ such that $C_G (g_1), \dots, C_G (g_t)$ are distinct and comprise all of the elements of ${\mathcal C} (G)$ that properly contain $C_G (g)$, then $\displaystyle Z(g) \setminus Z(G) = Z^* (g) \cup (\bigcup_{i=1}^t Z^* (g_i))$.   In particular, $\displaystyle \bigcup_{i=1}^t Z^* (g_i)$ is a proper subset of $Z(g) \setminus Z(G)$. In addition, if $h \in G \setminus Z(G)$, then the following are equivalent:
\begin{enumerate}
\item $h \in Z(g)$.
\item $Z^* (h) \subseteq Z(g)$.
\item $Z(h) \le Z(g)$.
\item $C_G (g) \le C_G (h)$.
\end{enumerate}
\end{lemma}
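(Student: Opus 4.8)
The plan is to establish the four equivalences first; the set equality then amounts to sorting the elements of $Z(g)\setminus Z(G)$ according to which member of $\mathcal{C}(G)$ is their centralizer. For the equivalences, the only new input needed beyond Lemma~\ref{star 1} is the observation, immediate from the definition of the center of a group, that $h\in Z(g)=Z(C_G(g))$ holds exactly when $h$ commutes with every element of $C_G(g)$; since $g\in C_G(g)$, this automatically forces $h\in C_G(g)$, so $(1)$ is equivalent to $C_G(g)\le C_G(h)$, which is $(4)$. The equivalence $(3)\Leftrightarrow(4)$ is precisely Lemma~\ref{star 1}(1). For $(2)\Rightarrow(1)$ one uses $h\in Z^*(h)$; and for $(1)\Rightarrow(2)$, if $h\in Z(g)$ then $Z(h)\le Z(g)$ by $(1)\Leftrightarrow(3)$, and any $k\in Z^*(h)$ has $Z(k)=Z(h)\le Z(g)$, so $k\in Z(g)$ again by $(3)\Rightarrow(1)$, giving $Z^*(h)\subseteq Z(g)$.

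For the set equality I would prove the two inclusions separately. For ``$\subseteq$'': given $h\in Z(g)\setminus Z(G)$, the equivalence $(1)\Leftrightarrow(4)$ gives $C_G(g)\le C_G(h)$; if $C_G(h)=C_G(g)$ then $Z(h)=Z(g)$ and $h\in Z^*(g)$, while if the containment is proper then $C_G(h)$ is a member of $\mathcal{C}(G)$ properly containing $C_G(g)$, hence equals $C_G(g_i)$ for some $i$, so $Z(h)=Z(g_i)$ and $h\in Z^*(g_i)$. For ``$\supseteq$'': the inclusion $Z^*(g)\subseteq Z(g)\setminus Z(G)$ is the observation recorded just before the statement, and for each $i$ the (proper) containment $C_G(g)\le C_G(g_i)$ gives $g_i\in Z(g)$ by $(1)\Leftrightarrow(4)$, whence $Z^*(g_i)\subseteq Z(g)$ by $(1)\Rightarrow(2)$; since also $Z^*(g_i)\subseteq G\setminus Z(G)$, we conclude $Z^*(g_i)\subseteq Z(g)\setminus Z(G)$.

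Finally, the sets $Z^*(g),Z^*(g_1),\dots,Z^*(g_t)$ are pairwise distinct $\bowtie$-classes, because the subgroups $C_G(g),C_G(g_1),\dots,C_G(g_t)$ are pairwise distinct (the $C_G(g_i)$ by hypothesis, and $C_G(g)\ne C_G(g_i)$ since $C_G(g_i)$ properly contains $C_G(g)$). Hence the union in the statement is a disjoint union, and since $g\in Z^*(g)\subseteq Z(g)\setminus Z(G)$ while $g\notin Z^*(g_i)$ for every $i$, the set $\bigcup_{i=1}^t Z^*(g_i)$ is a proper subset of $Z(g)\setminus Z(G)$. (The case $t=0$, i.e.\ $C_G(g)$ maximal in $\mathcal{C}(G)$, is covered by the same argument, the equality then reading $Z(g)\setminus Z(G)=Z^*(g)$.)

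I do not anticipate a genuine obstacle: the argument is bookkeeping with the order-reversing correspondence of Lemma~\ref{star 1} and the equivalence relation $\bowtie$. The point that most needs care is the direction of that correspondence — $C_G(g)\le C_G(h)$ matches $Z(h)\le Z(g)$, not the reverse — together with the repeated use of the elementary fact that an element lies in its own center, which is precisely what makes condition $(1)$ tractable.
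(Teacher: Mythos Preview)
Your proof is correct, and the logical content overlaps almost entirely with the paper's. The main difference is organizational: you establish the four equivalences first---via the direct observation that $h\in Z(C_G(g))$ is literally the statement $C_G(g)\le C_G(h)$---and then read off the set equality as a sorting of $\bowtie$-classes; the paper instead proves the set equality first (implicitly using your $(1)\Leftrightarrow(4)$ step inside that argument) and afterwards derives the equivalences, invoking Lemma~\ref{intersection} for the implication $(2)\Rightarrow(3)$. Your route is marginally more self-contained, since you never need Lemma~\ref{intersection}, and the cycle $(1)\Leftrightarrow(4)\Leftrightarrow(3)$ followed by $(1)\Rightarrow(2)\Rightarrow(1)$ is a bit cleaner than the paper's mix of direct and contrapositive arguments. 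Either way the substance is the order-reversing correspondence of Lemma~\ref{star 1} together with the tautology $h\in Z^*(h)$.
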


\begin{proof}
Since $C_G (g) < C_G (g_i)$, we have $Z^* (g_i) \subseteq Z(g_i) \setminus Z(G) \subseteq Z(g) \setminus Z(G)$ by Lemma \ref{star 1} for each $i$.  It follows that $\displaystyle \bigcup_{i=1}^t Z^* (g_i) \subseteq Z(g) \setminus Z(G))$.  Since $Z^* (g) \subseteq Z(g) \setminus Z (G)$, we have $\displaystyle Z^* (g) \cup (\bigcup_{i=1}^t Z^* (g_i)) \subseteq Z(g) \setminus Z(G)$.
	
On the other hand, if $h \in Z(g) \setminus Z(G))$, then $h$ centralizes $C_G (g)$.  We see that $C_G (g)$ centralizes $h$; so $C_G (g) \le C_G (h)$.  If $C_G (g) = C_G (h)$, then $h \in Z^* (g)$.  Thus, we may assume $C_G (g) < C_G (h)$.  Now, $C_G (h)$ is an element of ${\mathcal C} (G)$ that contains $C_G (g)$.   Since $h \not\in Z(G)$, it follows that $C_G (h) = C_G (g_i)$ for a unique $i$ with $1 \le i \le t$.  This implies $h \in Z^* (g_i)$, and so, $\displaystyle h \in \bigcup_{i=1}^t Z^* (g_i)$.  We conclude that $\displaystyle Z(g) \setminus Z(G)) = Z^* (g) \cup (\bigcup_{i=1}^t Z^* (g_i))$.   Since the $Z^*$'s are disjoint and $Z^* (g)$ is not empty, this implies that  the conclusion that $\displaystyle \bigcup_{i=1}^t Z^* (g_i)$ is proper in $Z(g) \setminus Z(G)$ is now immediate. 
	
We know that $Z(h) \le Z(g)$ is equivalent to $C_G (h) \le C_G (g)$.  Suppose $C_G (g) \le C_G (h)$.  If $C_G (g) = C_G (h)$, then $Z^* (g) = Z^* (h)$.  Also, when $C_G (g) < C_G (h)$, we have shown that $Z^* (h) \subseteq Z (g)$.  This shows that $Z^* (h) \subseteq Z^* (g)$ in both cases.
	
It suffices to show that if $Z^* (h) \subseteq Z (g)$, then $Z(h) \le Z(g)$.  We show the contrapositive.  Suppose $Z (h) \not\le Z (g)$.  This implies that $h \not\in Z(g)$ since by Lemma \ref{intersection}, we have $Z(h) \cap Z(g)$ is the product of $Z (c)$ where $c$ runs over the elements in $Z(h) \cap Z(g)$.  Since $h$ lies in $Z^* (h)$, this implies that $Z^* (h)$ is not contained in $Z(g)$, and we have proved the contrapositive.
	
Suppose $h \in Z(g)$.  Then $\displaystyle h \in Z^* (g) \cup (\bigcup_{i=1}^t Z^* (g_i))$.  Either $h \in Z^* (g)$ or $h \in Z^* (g_i)$ for some $i$.  In either case, $Z (h) \le Z(g)$.  Conversely, if $Z(h) \le Z(g)$, then $h \in Z(h)$ implies $h \in Z(g)$.  This shows that all of the equivalences hold.
\end{proof}

Note that $C_G (g)$ being minimal in ${\mathcal C} (G)$ is equivalent to $Z (g)$ being maximal in ${\mathcal Z} (G)$.  Similarly, $C_G (g)$ being maximal in ${\mathcal C} (G)$ is equivalent to $Z (g)$ being minimal in ${\mathcal Z} (G)$.  The following corollary is an immediate consequence of Lemma \ref{star 2}.

\begin{corollary} \label{maximal Z}
Let $G$ be a nonabelian group and $g \in G \setminus Z(G)$.  If $Z(g)$ is maximal in ${\mathcal {Z} (G)}$, then $Z(g)$ is the only subgroup in ${\mathcal {Z} (G)}$ that contains $Z^* (g)$.
\end{corollary}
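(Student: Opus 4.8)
The plan is to obtain this immediately from the chain of equivalences at the end of Lemma \ref{star 2}, invoking the maximality hypothesis only at the very last step. Let $Y \in \mathcal{Z}(G)$ be any subgroup with $Z^*(g) \subseteq Y$. Since $Y \in \mathcal{Z}(G)$, we may write $Y = Z(h)$ for some $h \in G \setminus Z(G)$, and the goal is to show $Z(h) = Z(g)$.

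The key step is to apply the equivalences (1)--(4) of Lemma \ref{star 2} with the roles of $g$ and $h$ interchanged. Read in that form, the equivalence of conditions $(2)$ and $(3)$ states: $Z^*(g) \subseteq Z(h)$ if and only if $Z(g) \le Z(h)$. Our hypothesis is precisely the left-hand side, so we conclude $Z(g) \le Z(h)$.

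To finish, I would simply use that $Z(g)$ is maximal in $\mathcal{Z}(G)$: since $Z(h) \in \mathcal{Z}(G)$ and $Z(g) \le Z(h)$, maximality forces $Z(g) = Z(h) = Y$. As $Z(g)$ itself contains $Z^*(g)$ (indeed $Z^*(g) \subseteq Z(g) \setminus Z(G)$ by the remark preceding Lemma \ref{star 2}), this shows $Z(g)$ is the unique member of $\mathcal{Z}(G)$ containing $Z^*(g)$.

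I do not anticipate a genuine obstacle. The only point requiring a moment's care is that the list of equivalences in Lemma \ref{star 2} is a biconditional among relations intrinsic to the pair $\{g,h\}$, so it is legitimate to read it with $g$ and $h$ swapped; alternatively one could observe $g \in Z^*(g) \subseteq Z(h)$ and pass through condition $(1)$, or translate everything to the centralizer side using the remark that $Z(g)$ is maximal in $\mathcal{Z}(G)$ exactly when $C_G(g)$ is minimal in $\mathcal{C}(G)$. Neither reformulation is needed for the argument above.
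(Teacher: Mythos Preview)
Your argument is correct and is essentially the same approach as the paper's: the paper simply states that Corollary~\ref{maximal Z} is an immediate consequence of Lemma~\ref{star 2}, and you have spelled out precisely how, by reading the equivalence (2)$\Leftrightarrow$(3) with $g$ and $h$ interchanged and then invoking maximality. The swap is legitimate since the equivalences in Lemma~\ref{star 2} are stated for arbitrary $g,h \in G \setminus Z(G)$.
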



Notice that this next theorem includes Theorem \ref{Intro thm 2}.

\begin{theorem} \label{max centers irredundant}
Let $G$ be a group.  Then the maximal elements of $\mathcal {Z} (G)$ form an irredundant cover of $G$.  In fact, a subset of $\mathcal {Z} (G)$ is a cover if and only if it contains the maximal elements of $\mathcal {Z} (G)$.
\end{theorem}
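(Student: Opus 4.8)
The plan is to leverage the equivalence classes $Z^*(g)$ together with Corollary~\ref{maximal Z}. The previous theorem already gives that the maximal elements of $\mathcal{Z}(G)$ form a cover, so only two things remain: that this cover is irredundant, and that it is contained in \emph{every} cover drawn from $\mathcal{Z}(G)$. The second assertion is stronger and immediately yields the first, so I would prove the ``in fact'' statement first and then read off irredundancy.

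For the forward direction of the ``in fact'' statement, suppose $\mathfrak{Z}\subseteq\mathcal{Z}(G)$ is a cover and let $Z(g)$ be an arbitrary maximal element of $\mathcal{Z}(G)$. Pick any $x\in Z^*(g)$, which is possible since $g\in Z^*(g)$ so this set is nonempty. Because $\mathfrak{Z}$ covers $G$ and $x\notin Z(G)$, there is some $Z(h)\in\mathfrak{Z}$ with $x\in Z(h)$. Applying the equivalence (1)$\Leftrightarrow$(2) of Lemma~\ref{star 2} with $h$ in the role of ``$g$'' and $x$ in the role of ``$h$'', we get $Z^*(x)\subseteq Z(h)$; and $Z^*(x)=Z^*(g)$ because $Z(x)=Z(g)$. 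Thus $Z(h)$ is a member of $\mathcal{Z}(G)$ containing $Z^*(g)$, so Corollary~\ref{maximal Z} forces $Z(h)=Z(g)$. Hence $Z(g)\in\mathfrak{Z}$, and since $Z(g)$ was an arbitrary maximal element, $\mathfrak{Z}$ contains every maximal element of $\mathcal{Z}(G)$. The reverse direction is immediate: if $\mathfrak{Z}$ contains all the maximal elements of $\mathcal{Z}(G)$, then since those already form a cover, so does $\mathfrak{Z}$.

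Irredundancy of the cover by maximal elements then follows at once. If one deletes a maximal element $Z(g)$ from this cover, the remaining set no longer contains every maximal element, so by the ``in fact'' statement it is not a cover; concretely, any $x\in Z^*(g)$ is left uncovered, since by Corollary~\ref{maximal Z} the subgroup $Z(g)$ is the only member of $\mathcal{Z}(G)$ containing $Z^*(g)$, hence the only member of the cover containing $x$.

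I do not anticipate a serious obstacle here: the substantive content is already packaged in Lemma~\ref{star 2} and Corollary~\ref{maximal Z}. The only points needing a little care are the bookkeeping of which element plays the role of ``$g$'' versus ``$h$'' when invoking Lemma~\ref{star 2}, and the observation that $Z^*(g)$ is nonempty so that deleting $Z(g)$ genuinely leaves a point uncovered; both are routine.
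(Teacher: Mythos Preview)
Your proposal is correct and follows essentially the same approach as the paper: both arguments hinge on Corollary~\ref{maximal Z} (that for $Z(g)$ maximal, $Z(g)$ is the unique member of $\mathcal{Z}(G)$ containing $Z^*(g)$) together with the fact that any element of $Z^*(g)$ lying in some $Z(h)\in\mathcal{Z}(G)$ forces $Z^*(g)\subseteq Z(h)$. The paper phrases this last step via the partition of $G\setminus Z(G)$ by the $Z^*$'s, whereas you invoke the (1)$\Leftrightarrow$(2) equivalence of Lemma~\ref{star 2} explicitly, but the underlying idea is identical.
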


\begin{proof}
Let $x_1, \dots, x_n$ be representatives of the distinct maximal elements of $\mathcal {Z} (G)$.  Since $Z(x_i)$ is the only subgroup in $\mathcal{Z} (G)$ that contains $Z^* (x_i)$ (by Corollary \ref{maximal Z}) and the $Z^*$'s partition $G \setminus Z(G)$, it follows that for a subset to cover $G$ it must contain all of the $Z(x_i)$'s which are all of the maximal subgroups in $\mathcal {Z} (G)$.  Furthermore, since none of these can be missing for the set of maximal subgroups in $\mathcal {Z} (G)$ to be a cover, it must be irredundant.  
\end{proof}

With this, we can obtain Theorem \ref{intro max irredundant} which is  a refinement of Proposition \ref{prop:cover_contain_z}.


\begin{proof}[Proof of Theorem \ref{intro max irredundant}]
Suppose $\mathfrak{C}$ is a cover of $G$.  By Proposition \ref{prop:cover_contain_z}, we know that every maximal subgroup in $\mathcal {Z} (G)$ lies in some subgroup of $\mathfrak {C}$.  Conversely, suppose every maximal subgroup of $\mathcal {Z} (G)$ lies in some subgroup of $\mathfrak {C}$.  Let $Z(x)$ be some center in $\mathcal{Z} (G)$.  We know $Z (x)$ lies in some maximal subgroup in $\mathcal {Z} (G)$ and so it lies some subgroup of $\mathfrak {C}$.  We see that $\mathfrak{C}$ is a cover of $G$ by Proposition \ref{prop:cover_contain_z}.  This proves (1).
	
Suppose $\mathfrak{C}$ is an irredundant cover.  Let $Z_1, \dots, Z_m$ be the maximal subgroups in $\mathcal{Z} (G)$.  By (1), for each $i$ with $1 \le i \le m$, we can find some $C_i \in \mathfrak C$ so that $Z_i \le C_i$.  It follows that $\{ C_1, \dots, C_m \} \subseteq \mathfrak{C}$ and by (1), $\{ C_1, \dots, C_m \}$ is a cover.  Since $\mathfrak{C}$ is irredundant, we have $\mathfrak{C} = \{ C_1, \dots, C_m \}$.  Because the $C_i$'s are not necessarily distinct, we obtain $|\mathfrak{C}| \le m$ which proves (2).
\end{proof}

\section{covers by centralizers and the centralizer graph}\label{secn:graph}

In this section, we consider the relationship between covers consisting of element centralizers and the centralizer graph.  Let $G$ be a group.  We define the centralizer graph $\Gamma_{\mathcal{Z}} (G)$ to be the graph whose vertex set is $\mathcal{Z} (G)$ and there is an edge between $Z(g)$ and $Z(h)$ if $Z(h) \le C_G (h)$.  We note that we use the bijection between $\mathcal {Z} (G)$ and $\mathcal {C} (G)$ since sometimes we will want to use $\mathcal {Z} (G)$ to label the vertices and other times (even within the same result) we will want to use $\mathcal {C} (G)$ to label the vertices.

A set of vertices $S$ in a graph $\Gamma$ is said to be a \textit{dominating set for $\Gamma$} if every vertex of $\Gamma$ is either in $S$ or is adjacent to a vertex in $S$.  A dominating set is \textit{minimal} if it contains no proper subset that is a dominating set.

We first prove Theorem \ref{intro dom}.


\begin{proof}[Proof of Theorem \ref{intro dom}]
By Proposition \ref{prop:cover_contain_z}, $\mathfrak{C}$ is a cover of $G$ if and only if every member of $\mathcal{Z}(G)$ is contained in a member of $\mathfrak{C}$.  This holds if and only if $\mathfrak{C}$ is a dominating set for $\Gamma_{\mathcal Z} (G)$.  To see this, let $Z(x) \in \mathcal{Z}(G)$ be arbitrary, and note that $Z(x)$ is contained in some member of $\mathfrak{C}$ if and only if either $C_G(x) \in \mathfrak{C}$ or $C_G(x) \notin \mathfrak{C}$ and $Z(x) \leq C_G(y)$ for some $C_G(y) \in \mathfrak{C}$; in other words, if and only if $\mathfrak{C}$ is a dominating set for $\Gamma_{\mathcal Z} (G)$.

Clearly, irredundancy holds if and only if $\mathfrak{C}$ is minimal as a dominating set.
%
\end{proof}

We obtain the following as a corollary.

\begin{theorem} \label{not adj}
Let $G$ be a group.  If the minimal subgroups in $\mathcal {Z} (G)$ are not adjacent in $\Gamma_{\mathcal Z} (G)$, then the maximal subgroups in $\mathcal {C} (G)$ are an irredundant cover of $G$.
\end{theorem}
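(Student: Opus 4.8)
The plan is to reduce everything to the graph criterion of Theorem~\ref{intro dom} and the fact, already proved in the theorem containing Theorem~\ref{Intro thm 1}, that the maximal subgroups in $\mathcal{C}(G)$ form a cover of $G$. Thus the only thing left to establish is irredundancy, and the hypothesis about nonadjacent minimal subgroups of $\mathcal{Z}(G)$ is tailor-made for that.

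First I would translate between the two labellings of the vertex set of $\Gamma_{\mathcal Z}(G)$. By the remark following Proposition~\ref{prop:cover_contain_z} (which rests on Lemma~\ref{star 1}(1) and the correspondence of Corollary~\ref{lem:corres}), $C_G(x)$ is maximal in $\mathcal{C}(G)$ if and only if $Z(x)$ is minimal in $\mathcal{Z}(G)$. So let $S$ be the set of vertices of $\Gamma_{\mathcal Z}(G)$ that are the minimal subgroups of $\mathcal{Z}(G)$; equivalently, $S$ corresponds to the maximal subgroups of $\mathcal{C}(G)$. Since those maximal subgroups cover $G$, Theorem~\ref{intro dom}(1) tells us $S$ is a dominating set for $\Gamma_{\mathcal Z}(G)$, and by Theorem~\ref{intro dom}(2) it will be an irredundant cover as soon as we show $S$ is a \emph{minimal} dominating set.

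For minimality it suffices to check that deleting any single vertex destroys domination; indeed, if some proper subset $S' \subsetneq S$ were still dominating, then choosing $v \in S \setminus S'$ we would have $S' \subseteq S \setminus \{v\}$, and a superset of a dominating set is dominating, so $S \setminus \{v\}$ would be dominating too. So suppose $v \in S$ and $S \setminus \{v\}$ were dominating. Since $v \notin S \setminus \{v\}$, the vertex $v$ must be adjacent in $\Gamma_{\mathcal Z}(G)$ to some vertex $w \in S \setminus \{v\}$. But $v$ and $w$ are both minimal subgroups of $\mathcal{Z}(G)$ with $v \neq w$, and this contradicts the hypothesis that the minimal subgroups of $\mathcal{Z}(G)$ are pairwise nonadjacent. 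Hence no vertex of $S$ can be removed, $S$ is a minimal dominating set, and by Theorem~\ref{intro dom}(2) the maximal subgroups of $\mathcal{C}(G)$ form an irredundant cover of $G$.

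There is no genuinely hard step here: the whole argument is a bookkeeping exercise sitting on top of Theorem~\ref{intro dom}. The two points worth being careful about are (i) verifying that ``maximal in $\mathcal{C}(G)$'' and ``minimal in $\mathcal{Z}(G)$'' really do describe the same set of vertices, so that the hypothesis applies to exactly the vertices of $S$, and (ii) the small observation that to prove a dominating set minimal it is enough to rule out single-vertex deletions. Everything else is immediate.
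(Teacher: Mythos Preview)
Your proof is correct and follows essentially the same approach as the paper: translate the hypothesis to the $\mathcal{C}(G)$-labelling via Corollary~\ref{lem:corres}, use that the maximal subgroups of $\mathcal{C}(G)$ form a cover and hence a dominating set by Theorem~\ref{intro dom}, and then observe that a pairwise non-adjacent dominating set is necessarily minimal. You spell out the minimality step in more detail than the paper does, but the argument is the same.
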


\begin{proof}
If the minimal subgroups in $\mathcal{Z}(G)$ are not adjacent in $\Gamma_{\mathcal{Z}}(G)$, then the maximal subgroups in $\mathcal{C}(G)$ are not adjacent in $\Gamma_{\mathcal{Z}}(G)$.  We know the maximal subgroups in $\mathcal{C}(G)$ are a cover of $G$ and hence a dominating set for $\Gamma_{\mathcal{Z}}(G)$ by Theorem \ref{intro dom}.  If these vertices are not pairwise adjacent in $\Gamma_{\mathcal{Z}}(G)$ then this is a minimal dominating set, and hence an irredundant cover by Theorem \ref{intro dom}.
\end{proof}

\section{$F$-groups}

Expanding on the work of It\^o in \cite{ito}, Rebmann in \cite{reb} defines a group $G$ to be an {\it $F$-group} if for all $x,y \in G \setminus Z(G)$ the condition $C_G(x) \le C_G (y)$ implies $C_G (x) = C_G (y)$.  We note that every semi extraspecial group is an $F$-group.  Recall that a $p$-group $G$ is semi extraspecial if every subgroup $N$ that is maximal in $Z(G)$ satisfies that $G/N$ is extraspecial.  


We start with a lemma that shows that there are equivalent versions of the definition of $F$-group that can be expressed in terms of the centers of elements.

\begin{lemma} \label{part}
Let $G$ be a group.  Then the following are equivalent:
\begin{enumerate}
\item $G$ is an $F$-group.
\item For all $a,b \in G \setminus Z(G)$, the element $b \in Z (a)$ implies $Z (a) = Z (b)$.
\item For all $a,b \in G \setminus Z(G)$, either $Z(a) = Z(b)$ or $Z (a) \cap Z(b) = Z(G)$.
\item For all $a,b \in G \setminus Z(G)$, $Z (b) \le Z (a)$ implies $Z (a) = Z (b)$.
\end{enumerate}
\end{lemma}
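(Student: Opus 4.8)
The plan is to prove the four statements equivalent by establishing a cycle of implications, drawing on the correspondence results from Section 2 — chiefly Lemma \ref{star 1}, which translates containments of centralizers into reverse containments of element centers, and Lemma \ref{intersection}, which describes $Z(a)\cap Z(b)$. First I would handle the equivalence of (1) and (4) directly from Lemma \ref{star 1}: the condition $C_G(x)\le C_G(y)$ is equivalent to $Z(y)\le Z(x)$, and $C_G(x)=C_G(y)$ is equivalent to $Z(x)=Z(y)$, so the $F$-group condition ``$C_G(x)\le C_G(y)$ implies $C_G(x)=C_G(y)$'' is, after relabelling, exactly ``$Z(b)\le Z(a)$ implies $Z(a)=Z(b)$''. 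This is essentially a restatement once the dictionary is in hand.

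Next I would prove (4) $\Rightarrow$ (2) $\Rightarrow$ (1) and close the cycle with (1) $\Rightarrow$ (3) $\Rightarrow$ (4). For (4) $\Rightarrow$ (2): if $b\in Z(a)$ then by Lemma \ref{star 2} (the equivalence of $h\in Z(g)$ with $Z(h)\le Z(g)$) we get $Z(b)\le Z(a)$, and (4) gives $Z(a)=Z(b)$. For (2) $\Rightarrow$ (1), suppose $C_G(x)\le C_G(y)$; then $x$ centralizes $C_G(y)\supseteq\{y\}$, wait — more carefully, $C_G(x)\le C_G(y)$ means $Z(y)=Z(C_G(y))\le Z(C_G(x))=Z(x)$ by Lemma \ref{star 1}, so $y\in Z(y)\le Z(x)$, and then (2) applied to the pair $(x,y)$ yields $Z(x)=Z(y)$, hence $C_G(x)=C_G(y)$ by Lemma \ref{star 1}(2). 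For (1) $\Rightarrow$ (3): given $a,b$ noncentral, consider $Z(a)\cap Z(b)$. By Lemma \ref{intersection} this is either $Z(G)$, in which case we are done, or it equals $\prod_c Z(c)$ over noncentral $c\in Z(a)\cap Z(b)$; for any such $c$ we have $c\in Z(a)$ and $c\in Z(b)$, so by the already-established (1) $\Leftrightarrow$ (2) route $Z(c)=Z(a)$ and $Z(c)=Z(b)$, forcing $Z(a)=Z(b)$. Finally (3) $\Rightarrow$ (4): if $Z(b)\le Z(a)$ then $Z(a)\cap Z(b)=Z(b)\ne Z(G)$ (as $b\notin Z(G)$ gives $b\in Z(b)\setminus Z(G)$), so the first alternative of (3) fails and we conclude $Z(a)=Z(b)$.

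I expect the main subtlety to lie in the (1) $\Rightarrow$ (3) step, where one must invoke Lemma \ref{intersection} to reduce an intersection of two element-centers to the generated subgroup of the element-centers lying inside it, and then feed each such generator back through the $F$-group hypothesis; getting the quantifiers straight (the hypothesis is about \emph{all} pairs, so it legitimately applies to the pairs $(a,c)$ and $(b,c)$) is the only place where care is needed. Everything else is a mechanical application of the order-reversing correspondence of Corollary \ref{lem:corres} together with Lemma \ref{star 2}. Since the individual implications are short, I would present them in the cyclic order (1) $\Rightarrow$ (3) $\Rightarrow$ (4) $\Rightarrow$ (2) $\Rightarrow$ (1), noting at the outset that (1) $\Leftrightarrow$ (4) is immediate from Lemma \ref{star 1} so that this equivalence may be used freely inside the other arguments.
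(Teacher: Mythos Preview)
Your proposal is correct and follows essentially the same approach as the paper: both arguments hinge on Lemma \ref{star 1} for the order-reversing dictionary between $\mathcal{C}(G)$ and $\mathcal{Z}(G)$ and on Lemma \ref{star 2} for the step $b\in Z(a)\Rightarrow Z(b)\le Z(a)$, with the implications chained in only slightly different orders. One minor remark: in your $(1)\Rightarrow(3)$ step the appeal to Lemma \ref{intersection} is superfluous---once $Z(a)\cap Z(b)\ne Z(G)$ you can simply pick any noncentral $c$ in the intersection and apply (2) directly, which is in effect what the paper does.
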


\begin{proof}
Suppose (1).  Consider $a,b \in G \setminus Z(G)$.  Suppose $b \in Z(a)$.  By Lemma \ref{star 2}, we have $Z(b) \le Z(a)$.  Then by Lemma \ref{star 1} (1), we have $C_G (a) \le C_G (b)$.  Since $G$ is an $F$-group, this implies $C_G (a) = C_G (b)$.  Using Lemma \ref{star 1} (2), we obtain $Z (a) = Z (b)$.  This proves (2).
	
Suppose (2).  I.e., suppose that $b \in Z(a)$ implies $Z(b) = Z(a)$.  Suppose $b \in G \setminus Z(G)$ so that $Z(b) \ne Z(a)$.  By Lemma \ref{star 1} (2), this implies that $C_G (b) \ne C_G (a)$.  Since $G$ is an $F$-group, this implies that $C_G (b)$ is not contained in $C_G (a)$ and $C_G (a)$ is not contained in $C_G (b)$.  By Lemma \ref{star 1} (1), we have $Z(b)$ is not contained in $Z (a)$ and $Z(a)$ is not contained in $Z (b)$.  In particular, $Z(b) \cap Z(a)$ is properly contained in both $Z(a)$ and $Z (b)$.  If $Z(a) \cap Z(b) > Z(G)$, then there exists $c \in Z (a) \cap Z (b) \setminus Z (G)$.  Observe that $Z (c) < Z (a)$ and by Lemma \ref{star 1}, we have $C_G (a) < C_G (c)$, which is a contradiction.  Thus (3) holds.
	
Suppose (3).  I.e., suppose either $Z(a) = Z(b)$ or $Z(a) \cap Z (b) = Z (G)$.   Suppose  $x,y \in G \setminus Z(G)$, so that $C_G (x) \le C_G (y)$.  By Lemma \ref{star 1} (1), we have $Z (y) \le Z(x)$.  Observe that $Z(x) \cap Z(y) = Z(y) > Z(G)$, so we must have $Z(x) = Z(y)$, and by Lemma \ref{star 1}  (2), $C_G (x) = C_G (y)$.  Thus, $G$ is an $F$-group, and (1) holds. 
	
Suppose (1).  Suppose $Z(b) \le Z(a)$.  By Lemma \ref{star 1} (1), we have $C_G (a) \le G_G (b)$ and thus, $C_G (a) = C_G (b)$.  Applying Lemma \ref{star 1} (2), we conclude that $Z (a) = Z(b)$ which is (4).  Conversely, assume (4).  Suppose $C_G (x) \le C_G (y)$.  By Lemma \ref{star 1} (1), $Z(y) \le Z(x)$, and this yields $Z (x) = Z(y)$.  In light of Lemma \ref{star 1} (2), we conclude that $C_G (x) = C_G (y)$, and hence, $G$ is an $F$-group.  This proves (1).
\end{proof}

We now have several corollaries of Lemma \ref{part}.  This first one shows that when $b \not\in Z(a)$, then $Z(b) \cap Z(a) = Z(G)$.

\begin{corollary} \label{twob}
Let $G$ be an $F$-group.  If $b \in G \setminus Z(a)$, then $Z (a) \cap Z(b) = Z(G)$.
\end{corollary}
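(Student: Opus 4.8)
The plan is to deduce this directly from the equivalent formulation of the $F$-group property recorded in Lemma \ref{part}(3). First I would observe that, although the statement is phrased only in terms of $b$, the element $a$ is implicitly noncentral, since $Z(a)$ only makes sense for $a \in G \setminus Z(G)$. Next I would note that $b$ is forced to be noncentral as well: every element center contains $Z(G)$, so $Z(G) \le Z(a)$, and $b \notin Z(a)$ then gives $b \notin Z(G)$. Hence $a, b$ is a pair of noncentral elements and Lemma \ref{part} applies to it.

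Now I would invoke Lemma \ref{part}(3): since $G$ is an $F$-group, either $Z(a) = Z(b)$ or $Z(a) \cap Z(b) = Z(G)$. The first alternative is impossible, because $b \in Z(b)$ always holds, so $Z(a) = Z(b)$ would give $b \in Z(a)$, contradicting the hypothesis $b \in G \setminus Z(a)$. Therefore the second alternative holds, i.e. $Z(a) \cap Z(b) = Z(G)$, which is exactly the claim. There is essentially no obstacle here; the only point requiring a moment's care is the observation that $b \notin Z(a)$ already forces $b \notin Z(G)$, so that Lemma \ref{part} is legitimately applicable.
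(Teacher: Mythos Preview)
Your argument is correct and matches the paper's own proof: both rule out $Z(a)=Z(b)$ (you via $b\in Z(b)$, the paper via Lemma~\ref{part}(2)) and then apply Lemma~\ref{part}(3). Your added remark that $b\notin Z(a)$ forces $b\notin Z(G)$, so that Lemma~\ref{part} legitimately applies, is a small clarification the paper leaves implicit.
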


\begin{proof}
Since $b \not\in Z(a)$, we know from Lemma \ref{part} (2) that $Z (a) \ne Z(b)$.  Now, applying Lemma \ref{part} (3), we have $Z(a) \cap Z (b) = Z(G)$.
\end{proof}

An immediate corollary of Lemma \ref{part} is that $G$ is an $F$-group exactly when the subgroups in either $\mathcal{C} (G)$ or $\mathcal{Z} (G)$ are all both maximal and minimal.

\begin{corollary}\label{maxmin}
Let $G$ be a group.  The following are equivalent:
\begin{enumerate}
\item $G$ is an $F$-group.
\item every subgroup in $\mathcal {C} (G)$ is both maximal and minimal in $\mathcal{C} (G)$.
\item every subgroup in $\mathcal {Z} (G)$ is both maximal and minimal in $\mathcal{Z} (G)$.
\end{enumerate}
\end{corollary}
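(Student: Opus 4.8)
The plan is to observe that all three conditions are merely restatements of the fact that there are no proper containments among the centralizers. First I would prove $(1)\Leftrightarrow(2)$ directly from the definition. By definition $G$ is an $F$-group precisely when $C_G(x)\le C_G(y)$ forces $C_G(x)=C_G(y)$ for all noncentral $x,y$, which is exactly the assertion that no member of $\mathcal{C}(G)$ is properly contained in another member of $\mathcal{C}(G)$. This ``no proper containment'' condition is in turn equivalent to the statement that every member of $\mathcal{C}(G)$ is simultaneously maximal (no member lies properly above it) and minimal (no member lies properly below it): a proper containment $A<B$ in $\mathcal{C}(G)$ would witness that $B$ fails to be minimal (and that $A$ fails to be maximal), and conversely if some $B\in\mathcal{C}(G)$ is not minimal then by definition there is $A\in\mathcal{C}(G)$ with $A<B$.

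For the equivalence with $(3)$ I would appeal to Corollary \ref{lem:corres}, which gives the order-reversing $1$-to-$1$ correspondence $\phi:\mathcal{C}(G)\to\mathcal{Z}(G)$ with $\phi(C_G(x))=Z(x)$. Since $\phi$ reverses containment, $X$ is maximal in $\mathcal{C}(G)$ if and only if $\phi(X)$ is minimal in $\mathcal{Z}(G)$, and $X$ is minimal in $\mathcal{C}(G)$ if and only if $\phi(X)$ is maximal in $\mathcal{Z}(G)$. Hence the property of being ``both maximal and minimal'' is carried from $\mathcal{C}(G)$ to $\mathcal{Z}(G)$ by $\phi$ (and back by $\phi^{-1}$), so $(2)$ holds for $\mathcal{C}(G)$ exactly when $(3)$ holds for $\mathcal{Z}(G)$. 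As an alternative route to $(1)\Leftrightarrow(3)$, one can simply invoke Lemma \ref{part}(4): $G$ is an $F$-group if and only if $Z(b)\le Z(a)$ implies $Z(a)=Z(b)$ for all noncentral $a,b$, which again says precisely that there are no proper containments in $\mathcal{Z}(G)$, i.e.\ every member of $\mathcal{Z}(G)$ is both maximal and minimal.

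I do not anticipate a genuine obstacle here; the statement is essentially a repackaging of the definition together with the correspondence already established. The only point deserving a sentence of care is the elementary poset observation that ``no element is properly contained in another'' is equivalent to ``every element is both maximal and minimal,'' which I would state explicitly so that the logical passage from the $F$-group condition to conditions $(2)$ and $(3)$ is transparent.
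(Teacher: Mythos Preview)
Your proposal is correct and matches the paper's approach: the paper presents this corollary with no proof, simply declaring it ``an immediate corollary of Lemma \ref{part},'' which is exactly the content you unpack. Your explicit use of the order-reversing bijection from Corollary \ref{lem:corres} (or equivalently Lemma \ref{part}(4)) to pass between $\mathcal{C}(G)$ and $\mathcal{Z}(G)$ is precisely the intended reasoning.
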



\begin{corollary} \label{threea}
Let $G$ be an $F$-group.  If $a, b \in G \setminus Z(G)$, then either $C_G (a) \cap Z(b) = Z(G)$ or $Z(b) \le C_G(a)$.
\end{corollary}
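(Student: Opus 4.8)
The plan is to argue by a dichotomy on whether the intersection $C_G(a) \cap Z(b)$ contains a noncentral element. Note first that $Z(G) \le C_G(a)$ and $Z(G) \le Z(b)$, so $Z(G) \le C_G(a) \cap Z(b)$ always holds; consequently the condition ``$C_G(a) \cap Z(b) = Z(G)$'' is genuinely equivalent to ``$C_G(a) \cap Z(b)$ contains no element of $G \setminus Z(G)$.'' Thus if no noncentral element lies in the intersection we are already in the first alternative, and the only substantive case is when some $c \in (C_G(a) \cap Z(b)) \setminus Z(G)$ exists, where the goal becomes to prove $Z(b) \le C_G(a)$.

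To handle that case I would extract two facts about such a $c$. Since $c \in C_G(a)$, Lemma \ref{three}(1) applied to the pair $(c, a)$ gives $Z(c) \le C_G(a)$. Since $c \in Z(b)$ and $G$ is an $F$-group, Lemma \ref{part}(2) (applied with the pair $(b, c)$) forces $Z(b) = Z(c)$. Combining these two facts yields $Z(b) = Z(c) \le C_G(a)$, which is exactly the second alternative, completing the proof. One could instead route the argument through the equivalences in Lemma \ref{star 2}, but invoking the $F$-group characterization in Lemma \ref{part} is the shortest path.

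I do not expect a genuine obstacle here; the statement is essentially immediate from the $F$-group hypothesis together with Lemma \ref{three}. The only thing requiring care is bookkeeping: both Lemma \ref{three} and Lemma \ref{part} are asymmetric in their two arguments, so one must be precise about which ordered pair of noncentral elements each lemma is being applied to, and must note that it is exactly the noncentrality of $c$ that licenses forming $Z(c)$ and invoking those lemmas in the first place.
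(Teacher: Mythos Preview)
Your proposal is correct and follows essentially the same route as the paper's proof: both argue that if $C_G(a)\cap Z(b)$ strictly contains $Z(G)$, then picking $c$ in the difference and invoking Lemma~\ref{part}(2) to get $Z(c)=Z(b)$ together with Lemma~\ref{three} to get $Z(c)\le C_G(a)$ yields $Z(b)\le C_G(a)$. The only cosmetic difference is the order in which the two facts about $c$ are derived.
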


\begin{proof}
We know $Z(G) \le C_G (a) \cap Z(b)$.  Suppose $Z(G) < C_G(a) \cap Z(b)$.  Then there exists $c \in C_G (a) \cap Z(b) \setminus Z(G)$.  Since $c \in Z(b)$, we know by Lemma \ref{part} (2) that $Z(c) = Z(b)$.  On the other hand, because $c \in C_G (a)$, we have by Lemma \ref{three} that $Z(c) \le C_G (a)$.  This proves the result.
\end{proof}

We now get another perspective on the relationship between $\mathcal{Z} (G)$ and the $Z^*$'s.  Notice that this also formalizes the relationship between maximal subgroups in $\mathcal {C} (G)$ and minimal subgroups in $\mathcal{Z} (G)$.

\begin{lemma} \label{maximal}
Let $G$ be a group and let $g \in G \setminus Z(G)$.   The following are equivalent:
\begin{enumerate}
\item $C_G(g)$ is maximal in ${\mathcal C} (G)$ 
\item $Z(g)$ is minimal in $\mathcal{Z} (G)$
\item $Z^* (g) = Z(g) \setminus Z(G)$.
\end{enumerate}
\end{lemma}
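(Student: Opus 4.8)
The plan is to prove the two equivalences $(1)\Leftrightarrow(2)$ and $(1)\Leftrightarrow(3)$ separately, since each rests on a single earlier result. For $(1)\Leftrightarrow(2)$ I would simply invoke the order-reversing bijection $\phi$ of Corollary \ref{lem:corres} (equivalently, Lemma \ref{star 1}~(1)): since $\phi$ is an inclusion-reversing one-to-one correspondence between $\mathcal{C}(G)$ and $\mathcal{Z}(G)$ carrying $C_G(g)$ to $Z(g)$, it takes maximal elements of $\mathcal{C}(G)$ to minimal elements of $\mathcal{Z}(G)$ and vice versa. This observation is in fact already recorded in the remark preceding Theorem~\ref{max centers irredundant}, so this part is immediate.

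For $(1)\Leftrightarrow(3)$, the key tool is Lemma \ref{star 2}. Choosing representatives $g_1,\dots,g_t$ of the distinct centralizers in $\mathcal{C}(G)$ that properly contain $C_G(g)$, that lemma gives $Z(g)\setminus Z(G) = Z^*(g)\cup\bigl(\bigcup_{i=1}^{t} Z^*(g_i)\bigr)$ with the $Z^*$'s pairwise disjoint and each $Z^*(g_i)$ nonempty (since $g_i\in Z^*(g_i)$). Now $C_G(g)$ is maximal in $\mathcal{C}(G)$ exactly when no centralizer properly contains it, i.e.\ when $t=0$, and in that case $Z(g)\setminus Z(G)=Z^*(g)$. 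Conversely, if $Z(g)\setminus Z(G)=Z^*(g)$, then because the sets $Z^*(g_i)$ are nonempty and disjoint from $Z^*(g)$ we are forced to have $t=0$, so $C_G(g)$ is maximal. Combining the two equivalences completes the proof.

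Since every step is a direct appeal to an earlier lemma, I do not anticipate a genuine obstacle here; the only point that deserves a moment's care is the converse half of $(1)\Leftrightarrow(3)$, where one must use that each $Z^*(g_i)$ is actually nonempty and disjoint from $Z^*(g)$ in order to deduce $t=0$ from the set equality $Z(g)\setminus Z(G)=Z^*(g)$.
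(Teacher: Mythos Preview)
Your argument is correct and follows exactly the paper's approach: the equivalence $(1)\Leftrightarrow(2)$ is the observation (recorded just before the lemma) coming from the order-reversing bijection of Corollary~\ref{lem:corres}, and $(1)\Leftrightarrow(3)$ is precisely what the paper means by ``immediate from Lemma~\ref{star 2} by taking $t=0$.'' Your only addition is to spell out the converse of $(1)\Leftrightarrow(3)$ using nonemptiness and disjointness of the $Z^*(g_i)$, which is the right justification and is implicit in the paper's terse proof.
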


\begin{proof}
This is immediate from Lemma \ref{star 2} by taking $t = 0$.  
\end{proof}

Notice that condition (2) in Lemma \ref{part} says that $G \setminus Z(G)$ is partitioned by the sets $\{ Z \setminus Z(G) \mid Z \in {\mathcal Z} (G) \}$.  We now prove Theorem \ref{intro F-group partition}.


\begin{proof}[Proof of Theorem \ref{intro F-group partition}]
Since $G$ is an $F$-group, we have by Corollary \ref{maxmin} that every center in $\mathcal {Z} (G)$ is minimal.  Using Lemma \ref{maximal}, we see that $Z (x) = Z^* (x) \cup Z(G)$ for every $g \in G \setminus Z(G)$.  Since the $Z^*$'s form a partition, the result follows.   Conversely, suppose that $G \setminus Z(G)$ is partitioned by $Z(g) \setminus Z(G)$ as $Z(g)$ runs over the subgroups in $\mathcal{Z} (G)$.	  It follows that if $x, y \in G \setminus Z(G)$ with $Z(x) \neq Z(y)$, then $Z(x) \cap Z(y) = Z(G)$.  By Lemma \ref{part}, $G$ is an $F$-group.
\end{proof}

We obtain the following characterization of nonabelian centralizers in $F$-groups.

\begin{lemma} \label{threeb}
Let $G$ be an $F$-group and fix $a \in G \setminus Z(G)$.  Then $C_G (a)$ is nonabelian if and only if there exists some $b \in G \setminus C_G (a)$ such that $Z(G) < C_G (a) \cap C_G (b)$.
\end{lemma}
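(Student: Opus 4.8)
The plan is to treat the two implications separately, the principal tool being Lemma~\ref{abel cent}, which tells us that $C_G(a)$ is abelian precisely when $C_G(a) = Z(a)$. For the forward direction, suppose $C_G(a)$ is nonabelian; then the inclusion $Z(a) = Z(C_G(a)) \le C_G(a)$ is proper, so we may fix some $h \in C_G(a) \setminus Z(a)$. Since $Z(G) \le Z(C_G(a)) = Z(a)$, the element $h$ is noncentral, so $Z(h)$ and $C_G(h)$ are defined. The crucial observation is that $C_G(h) \ne C_G(a)$: if they were equal, Lemma~\ref{star 1}(2) would give $Z(h) = Z(a)$, but every noncentral element lies in its own center, so $h \in Z(h) = Z(a)$, contradicting the choice of $h$.

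Now I invoke the $F$-group hypothesis: since $C_G(h) \ne C_G(a)$, there can be no proper containment between them, so in particular $C_G(h) \not\le C_G(a)$, and we may choose $b \in C_G(h) \setminus C_G(a)$. Because commuting is symmetric, $b \in C_G(h)$ gives $h \in C_G(b)$; combined with $h \in C_G(a)$ this shows that $h$ is a noncentral element of $C_G(a) \cap C_G(b)$, whence $Z(G) < C_G(a) \cap C_G(b)$ while $b \notin C_G(a)$, exactly as required.

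For the converse, suppose such a $b$ exists and assume toward a contradiction that $C_G(a)$ is abelian, i.e.\ $C_G(a) = Z(a)$ by Lemma~\ref{abel cent}. Choose $c \in (C_G(a) \cap C_G(b)) \setminus Z(G)$. From $c \in C_G(a) = Z(a)$ and the fact that $G$ is an $F$-group, Lemma~\ref{part}(2) gives $Z(c) = Z(a)$; from $c \in C_G(b)$, Lemma~\ref{three}(1) gives $Z(c) \le C_G(b)$. Hence $C_G(a) = Z(a) = Z(c) \le C_G(b)$, and since $a \in C_G(a)$ we obtain $a \in C_G(b)$, i.e.\ $b \in C_G(a)$, contradicting the choice of $b$. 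Therefore $C_G(a)$ is nonabelian. The only delicate point is the forward direction's production of the witness $b$ outside $C_G(a)$: it rests on locating a noncentral $h$ inside $C_G(a)$ with $C_G(h) \ne C_G(a)$ and then using that an $F$-group admits no proper containments among its centralizers, whereas the converse is essentially bookkeeping with the center--centralizer dictionary of Section~2.
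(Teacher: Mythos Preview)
Your proof is correct and follows essentially the same approach as the paper's. The only cosmetic differences are in which auxiliary results you cite: in the forward direction the paper invokes Corollary~\ref{twob} and Lemma~\ref{star 1}(1) to rule out $C_G(h)\le C_G(a)$, whereas you argue $C_G(h)\ne C_G(a)$ via Lemma~\ref{star 1}(2) and then appeal directly to the $F$-group definition; in the converse the paper cites Corollary~\ref{threea}, whose proof you have simply inlined using Lemma~\ref{part}(2) and Lemma~\ref{three}(1).
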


\begin{proof}
Suppose that $C_G (a)$ is nonabelian.  This implies $Z (a) < C_G (a)$.  Hence, there exists $c \in C_G(a) \setminus Z(a)$.  By Corollary \ref{twob}, we know that $Z(a) \cap Z(c) = Z(G)$.  If $C_G (c) \le C_G(a)$, then $Z(a) \le Z(c)$ which is a contradiction.  Thus, there exists $b \in C_G (c) \setminus C_G (a)$.  Now, $c \in C_G(b)$, so $c \in (C_G (a) \cap C_G (b)) \setminus Z(G)$.
	
Conversely, assume $Z(G) < C_G(a) \cap C_G(b)$ for some $b \in G \setminus C_G (a)$.  Suppose that $C_G (a)$ is abelian.  This implies $C_G (a) = Z(a)$, and so, $C_G (a) \cap C_G (b) = Z(a) \cap C_G (b)$.  We obtain $Z(G) < Z(a) \cap C_G (b)$.  By Corollary \ref{threea}, we have $Z(a) \le C_G(b)$.  We deduce that $a \in C_G(b)$ and so $b \in C_G (a)$ which is a contradiction.  Therefore, $C_G(a)$ is not abelian.
\end{proof}



Following \cite{DHJ}, we say that a group $G$ is a CA-group if $C_G (g)$ is abelian for all $g \in G \setminus Z(G)$.  It is not difficult to see that every CA-group will be an $F$-group.  However, there are many $F$-groups that are not CA-groups.  Recall that Suzuki has classified the groups where the centralizers of all nonidentity elements are abelian.  In particular, it is shown in \cite{bsw} and \cite{suz} that if $G$ is a nonabelian group where all the centralizers are abelian, then either $G$ is ${\rm PSL} (2,2^m)$ for some integer $m \ge 2$ or $G$ is a Frobenius group whose Frobenius kernel and Frobenius complement are abelian.



\begin{proof} [Proof of Theorem \ref{intro thm:CA}]
($\Leftarrow$) In a CA-group, we know by Lemma \ref{abel cent} that $C_G (a) = Z(a)$ for all $a \in G \setminus Z(G)$.  We also know that $G$ is an $F$-group, so every center in maximal in $\mathcal{Z} (G)$.  Using Theorem \ref{max centers irredundant}, $\mathcal {Z} (G) = \mathcal {C} (G)$ is an irredundant cover of $G$.  
	
($\Rightarrow$) We prove the contrapositive.  Suppose $G$ is not a CA-group.  Let $a_1, \dots, a_n \in G \setminus Z(G)$ be chosen so that $C_G(a_1), \dots, C_G (a_n)$ are the distinct subgroups in $\mathcal {C}  (G)$ and hence, $Z(a_1), Z(a_2), \dots, Z (a_n)$ are the distinct subgroups in $\mathcal {Z} (G)$.  Since $G$ is not a CA-group, there is some $i$ so that $C_G (a_i)$ is not abelian.  Without loss of generality, we may assume $C_G (a_1)$ is not abelian.  Thus, there is an element $b \in C_G(a_1) \setminus Z(a_1)$.  We know that $b \in Z^* (a_i)$ for some $i$ with $2 \le i \le n$.  Again, without loss of generality, we may assume $b \in Z^* (a_n)$.  This implies that $Z^* (b) = Z^* (a_n) \subseteq C_G (a_1)$.  It follows that $Z (a_n) \le Z(a_1)$.  Let $C^\# = \mathcal {C} (G) \setminus \{ C_G (a_n) \}$.   Observe that for $2 \le j \le n-1$, we have $Z (a_j) \le C_G (a_j) \in C^\#$.  By Proposition \ref{prop:cover_contain_z}, we see that $C^\#$ is a cover of $G$.  This proves that $\mathcal {C} (G)$ is not irredundant. 
\end{proof}

If $G$ is not an $F$-group, then clearly $\mathcal {C}(G)$ will not be irredundant.   We mention an example of a group $G$ that is not an $F$-group and both maximal subgroups in $\mathcal {C} (G)$ and the minimal subgroups in $\mathcal {C} (G)$ are irredundant as covers.  Let $G = S_4$.  Note that there are $7$ maximal centralizers and they are irredundant as a cover: the four $C_3$ centralizers are abelian, and hence, equal to their centers; and all three $D_8$ maximal centralizers are needed to contain those three centers.  Note that the set of minimal centralizers is irredundant as a cover since all $10$ minimal centralizers are abelian, and hence all are needed to contain their centers (which are themselves).  See Figure \ref{fig:fig1}.  

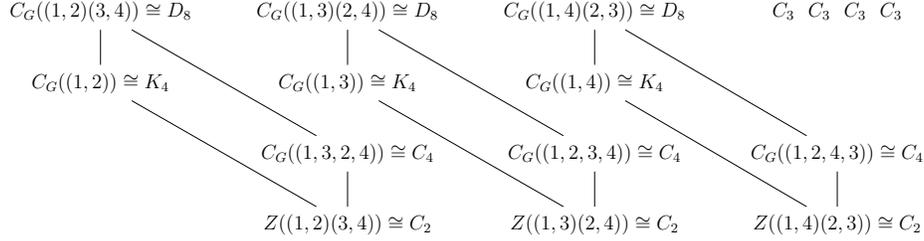
\begin{figure}[H]
	\centering
	\adjustbox{scale=0.64}{%
		\begin{tikzcd}
	{{C}_G((1,2)(3,4))\cong D_8} & {{C}_G((1,3)(2,4))\cong D_8} & {{C}_G((1,4)(2,3))\cong D_8} & {C_3 \,\,\,\, C_3 \,\,\,\, C_3 \,\,\,\, C_3} \\
	{{C}_G((1,2))\cong K_4} & {{C}_G((1,3))\cong K_4} & {{C}_G((1,4))\cong K_4} \\
	& {{C}_G((1,3,2,4))\cong C_4} & {{C}_G((1,2,3,4))\cong C_4} & {{C}_G((1,2,4,3))\cong C_4} \\
	& {Z((1,2)(3,4))\cong C_2} & {Z((1,3)(2,4))\cong C_2} & {Z((1,4)(2,3))\cong C_2}
	\arrow[no head, from=1-1, to=2-1]
	\arrow[no head, from=1-2, to=2-2]
	\arrow[no head, from=1-3, to=2-3]
	\arrow[no head, from=2-1, to=4-2]
	\arrow[no head, from=2-2, to=4-3]
	\arrow[no head, from=2-3, to=4-4]
	\arrow[no head, from=3-2, to=1-1]
	\arrow[no head, from=3-2, to=4-2]
	\arrow[no head, from=3-3, to=1-2]
	\arrow[no head, from=3-3, to=4-3]
	\arrow[no head, from=3-4, to=1-3]
	\arrow[no head, from=3-4, to=4-4]
		\end{tikzcd}
	}
	\caption{A Hasse diagram showing the centralizers and centers of $S_4$.}
	\label{fig:fig1}
\end{figure}

We close the paper with two interesting observations about $F$-groups that are $p$-groups.

\begin{theorem}\label{thm:F-Gp-Exp}
Let $G$ be $p$-group that is an $F$-group and is not a CA-group.  There exists a nonabelian centralizer $C$ with $\mathrm{exp} (C) = \mathrm{exp} ({Z} (C))$. 
\end{theorem}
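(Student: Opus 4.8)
The plan is to run an extremal argument on the exponent of the center of a centralizer. Since $G$ is not a CA-group there is at least one nonabelian centralizer, so among all nonabelian centralizers I would pick $C = C_G(a)$ for which $\mathrm{exp}(Z(C)) = \mathrm{exp}(Z(a))$ is as large as possible, and then show this particular $C$ works. So suppose, for contradiction, that $\mathrm{exp}(C) \neq \mathrm{exp}(Z(C))$. Because $G$ is a $p$-group and $Z(C) \le C$, these exponents are powers of $p$ with $\mathrm{exp}(Z(C))$ a proper divisor of $\mathrm{exp}(C)$; hence there is $x \in C$ with $|x| = \mathrm{exp}(C) > \mathrm{exp}(Z(C))$. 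In particular $x \notin Z(C) = Z(a)$, and since $Z(G) \le Z(a)$ this gives $x \in G \setminus Z(G)$, so $Z(x)$ and $C_G(x)$ are defined.

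Next I would record two facts about $x$. First, every element of $C_G(x)$ commutes with $x$, so $x \in Z(C_G(x)) = Z(x)$; consequently $\mathrm{exp}(Z(x)) \ge |x| > \mathrm{exp}(Z(a))$, and in particular $Z(x) \neq Z(a)$. Second, $x \in C_G(a)$ forces $a \in C_G(x)$, so Lemma \ref{three}(1) gives $Z(a) \le C_G(x)$. Now I claim $C_G(x)$ is nonabelian: if it were abelian then $C_G(x) = Z(x)$ by Lemma \ref{abel cent}, so $Z(a) \le Z(x)$, and since $G$ is an $F$-group Lemma \ref{part}(4) would give $Z(a) = Z(x)$, contradicting the first fact. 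Therefore $C_G(x)$ is a nonabelian centralizer with $\mathrm{exp}(Z(C_G(x))) = \mathrm{exp}(Z(x)) > \mathrm{exp}(Z(a)) = \mathrm{exp}(Z(C))$, contradicting the maximality in the choice of $C$. This contradiction yields $\mathrm{exp}(C) = \mathrm{exp}(Z(C))$ for the nonabelian centralizer $C$, as desired.

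The $p$-group hypothesis is used only at the very start, to turn ``$\mathrm{exp}(C) \neq \mathrm{exp}(Z(C))$'' into the existence of an element whose order strictly exceeds the exponent of $Z(C)$ (one needs element orders to be totally ordered by divisibility). The step I expect to be the crux is ruling out that $C_G(x)$ is abelian: a priori $C_G(x)$ could be an abelian centralizer sitting properly above $Z(a)$, and it is precisely the $F$-group condition --- in the form $Z(a) \le Z(x) \Rightarrow Z(a) = Z(x)$ coming from Lemma \ref{part}(4) --- that forbids this. Everything else is bookkeeping with the order-reversing correspondence between $\mathcal{C}(G)$ and $\mathcal{Z}(G)$.
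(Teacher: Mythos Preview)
Your proof is correct and follows essentially the same approach as the paper: pick a nonabelian centralizer $C_G(a)$ whose center has maximal exponent, then show that for $x\in C_G(a)\setminus Z(a)$ the centralizer $C_G(x)$ must itself be nonabelian (using the $F$-group hypothesis), so that maximality bounds $|x|$ by $\mathrm{exp}(Z(a))$. The only cosmetic differences are that the paper argues directly rather than by contradiction, and deduces that $C_G(x)$ is nonabelian via the dual containment $Z(x)<C_G(a)$ instead of your $Z(a)\le C_G(x)$.
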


\begin{proof}
Among all nonabelian proper centralizers in $G$, choose one whose center has the largest exponent, call this centralizer ${C}_G(a)$.  For any element $x \in {C}_G(a) \setminus {Z}(a)$, we have ${Z}(x) < {C}_G(a)$, and as $G$ is an $F$-group, ${Z}(x)$ is not a centralizer, and so, we have a nonabelian proper centralizer ${C}_G(x)$ with center ${Z}(x)$.  Now, $x \in {Z}(x)$, and since ${Z}(a)$ was chosen to have the largest exponent, we have $\mathrm{o}(x) \leq \mathrm{exp}({Z}(a))$.  Since this was true for any element $x \in {C}_G(a) \setminus {Z}(a)$, it follows that $\mathrm{exp}({C}_G(a)) = \mathrm{exp}(Z(a))$. 
\end{proof}

Since semi-extraspecial groups are $F$-groups, our final result is related to Verardi's count of abelian subgroups of maximal possible order in semi-extraspecial groups that is found in Theorem 3.8 of \cite{verardi}.  We close by proving Theorem \ref{intro $F$-group $p$-group}.


\begin{proof}[Proof of Theorem \ref{intro $F$-group $p$-group}]
In an $F$-group, each center $Z = Z^* \cup Z(G)$ where the union is disjoint and $Z^*$ is an equivalence class under the relation $x \bowtie y$ if and only if $Z(x) = Z(y)$.  Thus $|Z^*| = |Z(G)|k$ where $k=|Z : Z(G)| -1$ is congruent to $-1$ modulo $p$.   Now the equivalence classes partition $G$, and so

$$|Z_1^*| + \dots + |Z_n^*| + |Z(G)| = |G|,$$ 
where $n$ is the number of centers/centralizers in $G$, and $Z_1^*, \dots, Z_n^*$ are distinct equivalence classes. Hence $n$ is congruent to $1$ modulo $p$. 
\end{proof}

\bigskip

{\bf Data Availability}:  There is no data associated with this paper.

\end{document}